\documentclass[a4paper,fleqn]{cas-sc}

\usepackage[authoryear,longnamesfirst]{natbib}
\usepackage{amsmath,amssymb,amsfonts,amsthm,mathtools}
\usepackage{enumitem}
\usepackage{booktabs}

\newtheorem{theorem}{Theorem}[section]

\newtheorem{proposition}[theorem]{Proposition}
\newtheorem{lemma}[theorem]{Lemma}

\theoremstyle{definition}
\newtheorem{definition}[theorem]{Definition}
\newtheorem{example}[theorem]{Example}
\theoremstyle{remark}
\newtheorem{remark}[theorem]{Remark}

\newcommand{\R}{\mathbb R}

\newcommand{\one}{\mathbf 1}
\newcommand{\Aalg}{\mathcal A}

\newcommand{\Aut}{\operatorname{Aut}}

\newcommand{\Orb}{\operatorname{Orb}}
\newcommand{\End}{\operatorname{End}}
\newcommand{\diag}{\operatorname{diag}}
\newcommand{\tr}{\operatorname{tr}}

\newcommand{\Span}{\operatorname{span}}
\newcommand{\Sel}{\operatorname{Sel}}

\newcommand{\homG}{\operatorname{hom}}

\ExplSyntaxOn
\RenewDocumentCommand \printorcid { }
 {
  \seq_if_empty:NF \g_stm_orcid_seq
   {
    \group_begin:
     \tex_let:D \thefootnote \relax \footnotetext
      {
       \raggedright
       \textsc{orcid}(s):\c_space_token
       \seq_use:Nn \g_stm_orcid_seq { ;~ }
      }
    \group_end:
   }
 }
\ExplSyntaxOff

\begin{document}
\let\WriteBookmarks\relax
\def\floatpagepagefraction{1}
\def\textpagefraction{.001}

\shorttitle{Principal adjacency-degree moments}
\shortauthors{Z. Lu}

\title [mode = title]{Adjacency-degree algebras and spectral determination of graphs}

\author[1]{Zhipeng Lu}
\cormark[1]
\ead{zhipeng.lu@hotmail.com}

\affiliation[1]{organization={Shenzhen MSU--BIT University and Guangdong Laboratory of Machine Perception and Intelligent Computing},
            city={Shenzhen},
            postcode={518172},
            state={Guangdong},
            country={China}}

\cortext[1]{Corresponding author}

\author[2]{Pengxiang Li}
\ead{pengxiangli@bit.edu.cn}

\affiliation[2]{organization={Beijing Institute of Technology},
            city={Beijing},
            postcode={100081},
            state={Beijing},
            country={China}}

\begin{abstract}
McKay proved that the spectra of all polynomial functions of the adjacency
matrix $A$ and the diagonal degree matrix $D$ determine a tree.  We prove a
principal version of this theorem.  Let $\mathcal A(G)=\langle I,A_G,D_G\rangle$
and let $M_G=\mathcal A(G)\mathbf1$ be the cyclic module generated by the
all-ones vector.  For connected graphs the ideal $\mathcal A(G)J\mathcal A(G)$,
where $J=\mathbf1\mathbf1^T$, acts on $M_G$ as the full endomorphism algebra.  We
show that every forest satisfies $M_G=U_G$, the automorphism-orbit module, and
that the induced algebra on the orbit quotient of a tree is a full matrix algebra.
It follows that the scalar moments $\mathbf1^Tw(A_T,D_T)\mathbf1$ determine every
tree.  For general graphs these moments are degree-decorated caterpillar
homomorphism counts.  The resulting moment-rigidity class lies inside the
amenable, compact, refinable hierarchy of color refinement, and its first
small-order failures are ten-vertex integral switchings invisible to $M_G$.
\end{abstract}

\begin{highlights}
\item The principal ideal acts as the full endomorphism algebra of $M_G$.
\item Forests are AD-observable, and trees are determined by principal AD moments.
\item Principal AD moments are degree-decorated caterpillar counts.
\end{highlights}

\begin{keywords}
Spectral graph theory \sep adjacency-degree algebra \sep tree reconstruction \sep color refinement \sep caterpillar homomorphisms
\end{keywords}

\maketitle

\section{Introduction}
\label{sec:introduction}

Spectral graph theory asks how much of a graph is encoded by linear algebraic
invariants.  Since the early examples of \citet{CollatzSinogowitz1957}, it has been clear
that the adjacency spectrum alone is usually not a complete invariant.
\citet{Schwenk1973} proved that almost all trees have a cospectral mate, and
Godsil--McKay switching gives a general construction of cospectral
graphs~\citep{GodsilMcKay1982}.  Surveys of graphs determined by their spectra
appear in~\citep{vanDamHaemers2003,vanDamHaemers2009}, and the standard
algebraic background may be found in~\citep{CvetkovicDoobSachs1980,GodsilRoyle2001}.

McKay's theorem is a striking positive result in this landscape: every tree is
determined by the spectra of all polynomial functions of its adjacency matrix
$A$ and diagonal degree matrix $D$~\citep{McKay1977}.  Equivalently, the pair
$(A,D)$ contains enough spectral information to reconstruct a tree. In this work, we consider a sharper form of this statement. Let
\[
        \mathcal A(G)=\langle I,A_G,D_G\rangle
\]
be the adjacency-degree algebra, and let
\[
        M_G=\mathcal A(G)\mathbf1
\]
be the cyclic module generated by the all-ones vector.  We study the scalar
principal moments
\[
        \mu_G(w)=\mathbf1^T w(A_G,D_G)\mathbf1,
        \qquad w\in\mathbb R\langle x,y\rangle .
\]
The main theorem says that these moments determine every tree.

The proof is deliberately close to McKay's leaf-stripping method, and we view it
as a linear-algebraic implementation of that method inside the principal cyclic
module.  The single matrix identity driving the argument is the following: if
$W$ is a set of leaves of a forest and $X_W$ is the diagonal indicator of $W$,
then
\[
        A X_W A
\]
is diagonal and its $v$-th diagonal entry is the number of selected leaves
adjacent to $v$.  This identity is the matrix form of the leaf-neighbor counts
used in McKay's pruning.  Its significance here is that the required selectors
are produced by ordinary multiplication in the algebra generated by $A$ and $D$.
Thus the pruning process yields orbit indicators from the principal module
rather than from the full adjacency-degree spectral calculus.

The algebraic framework is as follows.  Because $A$ and $D$ are symmetric,
$\mathcal A(G)$ is a transpose-closed finite-dimensional real matrix algebra,
and hence semisimple.  We do not classify all of its simple summands.  Instead,
we isolate the canonical summand selected by $\mathbf1$.  For connected graphs,
$J/n$, where $J=\mathbf1\mathbf1^T$, is the Laplacian zero-eigenspace projection,
so $J\in\mathcal A(G)$.  We prove
\[
        \mathcal A(G)J\mathcal A(G)
        =\operatorname{span}\{uv^T:u,v\in M_G\},
\]
and this ideal acts on $M_G$ as $\operatorname{End}(M_G)$.

The graph-theoretic content is measured by the modules
\[
        M_G\subseteq W_G\subseteq U_G,
\]
where $W_G$ is the stable color-refinement module and $U_G$ is the automorphism-orbit
module.  The equality $W_G=U_G$ is the standard refinability condition of
\citet{ArvindKoblerRattanVerbitsky2017}; in their hierarchy compact graphs are
refinable, and amenable graphs are compact.  The graphs determined by the
principal moments therefore sit in the established chain
\[
        \{\text{graphs determined by principal moments}\}
        \subseteq \{\text{amenable graphs}\}
        \subseteq \{\text{compact graphs}\}
        \subseteq \{\text{refinable graphs}\}.
\]
Note that compactness in Tinhofer's sense is integrality of the fractional automorphism polytope;
compact graphs are refinable and amenable graphs are compact
\citep{Tinhofer1991,ArvindKoblerRattanVerbitsky2017}. Thus compactness explains
one standard route to the right-hand equality $W_G=U_G$, while our work studies
the left-hand principal inclusion $M_G\subseteq W_G$ and the scalar moments of
$M_G$.  This inclusion is generically an equality for the elementary reason that
if the ordinary walk matrix
\[
        [\one,A\one,A^2\one,\ldots,A^{n-1}\one]
\]
has full rank, then its columns already span $M_G=\mathbb R^{V(G)}$, while almost all graphs are controllable in this sense by the theorem of \citet{ORourkeTouri2016}.
The right-hand equality alone, however, does not force the left-hand equality.
For example, the 8-vertex graph \verb|G?qa`g| is refinable: its stable
color-refinement partition agrees with its automorphism-orbit partition and has
five cells.  Nevertheless its principal cyclic module has dimension four, so
$M_G\subsetneq W_G=U_G$.  This density-one observation is only module-theoretic.
The forest theorem proved below gives a structured, non-generic family with
$M_F=U_F$ for every forest $F$.  For trees, the orbit quotient carries more structure:
the operators induced by $A$ and $D$ on the quotient generate a full matrix algebra.
Consequently the principal moments recover the weighted orbit quotient, and the
weighted orbit quotient recovers the tree.

For comparison, one-parameter spectral refinements are not enough for trees. The
generalized characteristic polynomial of Wang--Li--Lu--Xu,
\[\det(\lambda I-(A-tD)),\]
is a natural slice of the adjacency-degree calculus~\citep{WangLiLuXu2011}.  It is
well known from McKay's construction that such one-parameter data can coincide
for non-isomorphic trees.  We include a small explicit pair in
Section~\ref{sec:tree-reconstruction}.  Related recent work on degree-similar
graphs studies the unpointed simultaneous-similarity problem for the pair
$(A,D)$. This is distinct from the pointed cyclic moment problem considered
here, since simultaneous similarity preserves trace words whereas principal
moments depend on the distinguished vector $\one$~\citep{GodsilSun2025DegreeSimilar,FanXingZhangWang2025DegreeSimilar}.

For general graphs, the same principal moments have a homomorphism-count
interpretation.  The monomial moment
\[
        \mathbf1^TD^{a_0}AD^{a_1}A\cdots AD^{a_\ell}\mathbf1
\]
is the homomorphism count from the \emph{caterpillar} obtained from a path by attaching
$a_i$ leaves to the $i$-th spine vertex.  Hence principal AD-rigidity (section~\ref{sec:caterpillar-rigidity}) is a rigidity
under degree-decorated caterpillar counts.  By the theorem of Dell--Grohe--Rattan,
color refinement is equivalence under all tree homomorphism counts~\citep{DellGroheRattan2018}.
Thus principal AD-rigid graphs are amenable to color refinement. Complete
enumeration shows that the two equivalence relations coincide on graphs with at
most nine vertices, but ten-vertex integral switchings invisible to the
principal module give the first strict separation. These switchings explain the
gap as a structural one: principal moments record one-spine degree-decorated tree
data, while color refinement records full rooted branching data.

We organize the paper as follows.  Section~\ref{sec:algebra} records the
adjacency-degree algebra, the principal ideal, and the modules $M_G$, $W_G$, and
$U_G$.  Section~\ref{sec:trees} proves the forest orbit-module theorem.
Section~\ref{sec:tree-reconstruction} proves tree reconstruction from principal
moments and places it next to the Wang pencil.  Section~\ref{sec:algorithmic-extraction}
extracts finite certificates and a linear-time tree canonization from the
constructive proof.  Section~\ref{sec:caterpillar-rigidity} discusses principal
AD-rigidity, the comparison with color refinement, and the first integral
switching obstructions.

\section{The adjacency-degree algebra and its principal block}
\label{sec:algebra}

Let $G$ be a finite simple graph with vertex set $V$, adjacency matrix $A$, degree
matrix $D$, and all-ones vector $\one$.  We write
\[
        \Aalg(G)=\langle I,A,D\rangle\subseteq M_V(\R)
\]
for the real algebra generated by $I,A,D$.  It is closed under transpose (hence a *-algebra), which has the following consequence: $\Aalg(G)$ is semisimple as a real matrix
algebra.  Indeed, if $R$ is its Jacobson radical, then transpose maps $R$ to
itself, because transpose is an anti-automorphism and the radical is invariant
under anti-isomorphism.  Thus, for $X\in R$, also $X^T\in R$ and hence
$XX^T\in R$.  Since the radical is nilpotent, $XX^T$ is nilpotent; being symmetric,
it has only the eigenvalue $0$, so $0=\tr(XX^T)=\|X\|_F^2$ and $X=0$.  Hence the
radical is zero. While classifying simple
summands of $\Aalg(G)$ might be of independent interest, we focus on understanding the cyclic summand generated by $\one$:
\[
        M_G=\Aalg(G)\one.
\]

Let $\mathcal P_{\rm WL}(G)$ be the stable color-refinement partition, and put
\[
        W_G=\Span\{\one_C:C\in\mathcal P_{\rm WL}(G)\}.
\]
The stable partition is equitable and degree-homogeneous, so $W_G$ is invariant
under $A$ and $D$ and contains $\one$.  Since color classes are unions of
automorphism orbits,
\[
        M_G\subseteq W_G\subseteq U_G,
        \qquad
        U_G=\Span\{\one_O:O\in\Orb(\Aut(G))\}.
\]
We say that $G$ is \emph{principal color-observable} if $M_G=W_G$.  The
equality $W_G=U_G$ is the standard \emph{refinable} property: color refinement
produces exactly the automorphism orbit partition~\cite{ArvindKoblerRattanVerbitsky2017}.
Finally, $G$ is \emph{AD-observable} if $M_G=U_G$. Thus, AD-observability is the
conjunction of principal color-observability and refinability.  

The inclusion $M_G\subseteq U_G$ is immediate: every automorphism of $G$ commutes
with $A$ and $D$ and fixes $\one$, so every vector in $M_G$ is constant on
automorphism orbits.  Equivalently, if $S$ is the orbit-indicator matrix and
$B,\Delta$ are the orbit quotient operators, then
\[
        M_G=S(\langle I,B,\Delta\rangle\one).
\]
Hence $G$ is AD-observable precisely when $\langle I,B,\Delta\rangle\one$ is the
full quotient space.

For connected graphs the vector $\one$ also selects a canonical ideal in the full
algebra.  Since the Laplacian $L=D-A$ has $\ker L=\R\one$, the orthogonal
projection $J/n$ onto this kernel is a polynomial in $L$ and belongs to
$\Aalg(G)$.

\begin{theorem}[The principal rank-one ideal]
\label{thm:principal-ideal}
Let $G$ be connected and put $J=\one\one^T$.  Then
\[
        \Aalg(G)J\Aalg(G)=\Span\{uv^T:u,v\in M_G\}.
\]
Moreover, after restriction to $M_G$, this ideal is the full endomorphism algebra
$\End(M_G)$.
\end{theorem}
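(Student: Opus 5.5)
The plan is to compute both sides directly, using that $\Aalg(G)$ is closed under transpose and that $J\in\Aalg(G)$ for connected $G$ (the projection $J/n$ onto $\ker L$ is a polynomial in $L=D-A$, as recorded just before the statement).

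For the inclusion $\subseteq$, take $X,Y\in\Aalg(G)$. Then $XJY=(X\one)(Y^T\one)^T$. Here $X\one\in M_G$ by definition. Since $A$ and $D$ are symmetric, the transpose of a word in $A,D$ is the reversed word, so $Y^T\in\Aalg(G)$ and hence $Y^T\one\in M_G$. Thus every spanning element of $\Aalg(G)J\Aalg(G)$ has the form $uv^T$ with $u,v\in M_G$.

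For $\supseteq$, let $u=X\one$ and $v=Y\one$ with $X,Y\in\Aalg(G)$. Then
\[
uv^T=X\one\one^TY^T=XJY^T\in\Aalg(G)J\Aalg(G),
\]
again because $Y^T\in\Aalg(G)$. Linear combinations stay in the ideal, so the two spans coincide. The ideal is a two-sided ideal of $\Aalg(G)$ simply because it is generated by $J\in\Aalg(G)$.

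For the endomorphism statement, note first that $M_G$ is $\Aalg(G)$-invariant, so every element of $\Aalg(G)$, and in particular of the ideal, restricts to an endomorphism of $M_G$. Choose an orthonormal basis $e_1,\dots,e_m$ of $M_G$ for the standard inner product. Each $e_ie_j^T$ lies in the ideal by the first part. Restricted to $M_G$, it is the matrix unit sending $e_j\mapsto e_i$ and killing the other $e_k$. These matrix units span $\End(M_G)$, so restriction gives a surjection from the ideal onto $\End(M_G)$.

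Injectivity also holds. An element $\sum c_{ij}e_ie_j^T$ vanishes on $M_G$ only if all $c_{ij}=0$, and these elements exhaust the ideal. Hence the ideal is in fact isomorphic to $\End(M_G)$, and it acts as zero on $M_G^\perp$.

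The only delicate point is the transpose-closure used to identify $Y^T\one$ with an element of $M_G$. It is immediate from symmetry of the generators $A$ and $D$, so I expect no real obstacle. Connectedness enters solely to guarantee $J\in\Aalg(G)$.
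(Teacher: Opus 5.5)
Your proof is correct and follows the paper's argument: the factorization $XJY=(X\one)(Y^T\one)^T$, together with transpose-closure of $\Aalg(G)$, gives both inclusions, and the rank-one operators $uv^T$ with $u,v\in M_G$ restrict to all of $\End(M_G)$ and vanish on $M_G^\perp$. Your explicit orthonormal-basis argument for injectivity is a small addition that the paper leaves implicit.
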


\begin{proof}
Every element of $\Aalg(G)J\Aalg(G)$ is a linear combination of matrices
$p(A,D)Jq(A,D)$.  Since $J=\one\one^T$,
\[
        p(A,D)Jq(A,D)=(p(A,D)\one)(q(A,D)^T\one)^T .
\]
The algebra is transpose-closed, so both $p(A,D)\one$ and $q(A,D)^T\one$ belong
to $M_G$.  Hence the ideal is contained in the displayed rank-one span.
Conversely, every $u,v\in M_G$ has the form $u=p(A,D)\one$, $v=q(A,D)\one$, and
\[
        uv^T=p(A,D)Jq(A,D)^T
\]
lies in the ideal.  The rank-one operators $uv^T$ span all endomorphisms of
$M_G$ and vanish on $M_G^\perp$.
\end{proof}

\paragraph{The full trace character vs principal character.} The full AD trace character$\tau_G(w)=\tr w(A_G,D_G)$ is strictly stronger than the principal character in general. The simplest examples are regular graphs whose principal character degenerates trivially. A more sophisticate non-regular example is as follows. 

\begin{example}[A 10-vertex principal-invisible switch]
\label{ex:n10-counterexample}

Let \(G_{10}\) be
the graph on \(\{0,\ldots,9\}\) with
\[
\begin{aligned}
E(G_{10})=\{&04,06,08,15,17,18,24,25,29,36,37,39,\\
            &47,48,56,59,67,68,79,89\},
\end{aligned}
\]
and \(H_{10}\) be obtained from \(G_{10}\) by the switch
\[
        04,15\quad\longleftrightarrow\quad 05,14 .
\]
The two graphs have the same degree sequence
\[
        (3,3,3,3,4,4,5,5,5,5).
\]
For \(G_{10}\), the principal module has dimension \(5\).  A certificate,
recorded in Appendix~\ref{app:n10-certificate}, gives a basis matrix \(K\) for
\(M_{G_{10}}\) with
\[
        A_{G_{10}}K=KC_A,\qquad D_{G_{10}}K=KC_D,
\]
and the switch perturbation \(E=A_{H_{10}}-A_{G_{10}}\) satisfies
\[
        EK=0.
\]
Thus \(D_{H_{10}}=D_{G_{10}}\) and \(A_{H_{10}}\) agrees with \(A_{G_{10}}\) on
the whole principal module.  Hence
\[
        \mathbf1^T w(A_{G_{10}},D_{G_{10}})\mathbf1
        =
        \mathbf1^T w(A_{H_{10}},D_{H_{10}})\mathbf1
        \qquad\text{for all words }w .
\]
The full trace character sees the switch immediately:
\[
        \operatorname{tr}(A_{G_{10}}^3)=30,
        \qquad
        \operatorname{tr}(A_{H_{10}}^3)=42 .
\]
So the full character distinguishes this principal mate, while the principal
character does not.  
\end{example}

It turns out that, for trees, this loss
of the non-principal summands does not occur: the principal character already
contains enough information to reconstruct the tree. This will be the focus of the subsequent sections.

\section{A matrix form of McKay leaf stripping}
\label{sec:trees}

This section proves the forest orbit-module theorem.  The construction follows
the leaf-stripping scheme used by McKay in the spectral characterization of
trees~\cite{McKay1977}.  The new point is that the peeling data are produced
inside the principal adjacency-degree module.  In particular, the leaf-neighbor
counts used by the pruning are realized by ordinary matrix products rather than
by adding external color selectors.

The proof has four steps.  First, the leaf diagonal identity converts selected
leaf sets into diagonal count operators.  Second, guarded pruning uses these
diagonal count operators to construct idempotents for lower rooted types.
Third, center selectors produce the seed orbit indicators at the terminal core.
Finally, reverse propagation moves these indicators back through the pruning
process.  The rooted orbit classification at the end verifies that the
constructed vectors are exactly the automorphism-orbit indicators.

\subsection{The leaf diagonal identity}

Let \(F\) be a forest on vertex set \(V\), with adjacency matrix \(A_F\).  For
\(W\subseteq V\), write \(X_W=\diag(\one_W)\).

\begin{lemma}[Leaf diagonal lemma]
\label{lem:leaf-diagonal}
If \(W\) is a set of leaves of \(F\), then \(A_FX_WA_F\) is diagonal and
\[
        (A_FX_WA_F)_{vv}=|N_F(v)\cap W|.
\]
\end{lemma}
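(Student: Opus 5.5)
The plan is a direct entrywise computation. Since $A_F$ is symmetric with $0/1$ entries and $X_W$ is diagonal with entries $\one_W$, for any $u,v\in V$ we have
\[
        (A_FX_WA_F)_{uv}=\sum_{w\in V}(A_F)_{uw}(\one_W)_w(A_F)_{wv}
        =\bigl|\{w\in W: w\sim u,\ w\sim v\}\bigr| .
\]
So the $(u,v)$ entry counts the selected vertices $w\in W$ that are common neighbours of $u$ and $v$. Equivalently, it counts walks $u\,w\,v$ of length two whose middle vertex lies in $W$.

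For the off-diagonal entries, take $u\neq v$. If some $w\in W$ were adjacent to both $u$ and $v$, then $w$ would have at least two distinct neighbours. This contradicts $w$ being a leaf, which has degree exactly $1$. Hence every off-diagonal entry vanishes and $A_FX_WA_F$ is diagonal.

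For the diagonal entries, take $u=v$. The sum becomes $\sum_{w\in W}(A_F)_{vw}^2=\sum_{w\in W}(A_F)_{vw}$, because the entries are $0/1$. This sum is exactly $|N_F(v)\cap W|$.

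There is no real obstacle here. The only point to watch is that the argument uses only $\deg w\le 1$ for $w\in W$. It does not use acyclicity, so the lemma holds in any simple graph. Isolated vertices in $W$ are also harmless, since they contribute nothing to any entry.
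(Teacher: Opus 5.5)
Your proof is correct and takes essentially the same route as the paper: an entrywise count of vertices $w\in W$ with $u\sim w\sim v$, using that a leaf cannot be a common neighbour of two distinct vertices. Your extra remark is also correct: the argument only uses $\deg w\le 1$, so the lemma holds in any simple graph.
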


\begin{proof}
The \((u,v)\)-entry of \(A_FX_WA_F\) counts vertices \(w\in W\) such that
\(u\sim_F w\sim_F v\).  If \(u\ne v\), no leaf \(w\) can be adjacent to both \(u\) and
\(v\).  Hence all off-diagonal entries vanish.  For \(u=v\), the same count is
exactly the number of leaves in \(W\) adjacent to \(v\).
\end{proof}

The lemma is the precise point where forests differ from general graphs.  For an
arbitrary set \(W\), the matrix \(AX_WA\) records common \(W\)-neighbors and need not
be diagonal.

\subsection{Guarded pruning of a forest}
\label{subsec:guarded-pruning}

A lower rooted type is defined recursively during the pruning.  Initially there is
one active type.  When leaves of types \(\rho\) are peeled from a surviving vertex
of previous type \(\sigma\), the new type records \(\sigma\) together with the
multiplicities of the peeled child types.  Equivalently, it is the rooted
isomorphism type of the branch below that vertex, away from the eventual center,
as revealed by the current stage.

We keep the full vertex set throughout the pruning process.  At stage \(t\) we
maintain:
\begin{itemize}[leftmargin=2em]
    \item the adjacency matrix \(A_t\) and degree matrix \(D_t\) of the current active
    forest together with previously peeled isolated vertices;
    \item diagonal idempotents \(P_{t,\rho}\) selecting active vertices whose
    current lower rooted type is \(\rho\).
\end{itemize}
Let
\[
        R_t=\sum_\rho P_{t,\rho}
\]
be the active-vertex selector.  Initially
\[
        A_0=A(F),\qquad D_0=D(F),\qquad P_{0,\bullet}=I.
\]
All matrices constructed below lie in \(\Aalg(F)\).

For the selector steps used in the pruning, fix
\[
        \Sel_k^{(N)}(z)=\prod_{0\le r\le N,\ r\ne k}\frac{z-r}{k-r}.
\]
If a diagonal matrix \(Z\) has entries in \(\{0,\ldots,N\}\), then
\(\Sel_k^{(N)}(Z)\) is the diagonal idempotent of the \(k\)-level set of \(Z\).  We
suppress \(N\) when the range is clear.

The active degree-one selector is
\[
        L_t=R_t\Sel_1(D_t).
\]
By Lemma~\ref{lem:leaf-diagonal},
\[
        h_t=A_tL_tA_t
\]
is diagonal.  For an active leaf \(v\), the value \((h_t)_{vv}\) is \(1\) exactly when
its unique active neighbor is also an active leaf, i.e.\ when \(v\) lies in an
active \(K_2\) component.  Hence
\[
        C_t=L_t\Sel_1(h_t)
\]
selects the terminal active \(K_2\) components.  These vertices should not be
peeled; they are center-edge vertices.  The peelable type-\(\rho\) leaves are
selected by
\[
        \eta_{t,\rho}=P_{t,\rho}L_t(I-C_t),
        \qquad
        \eta_t=\sum_\rho \eta_{t,\rho}.
\]
If \(\eta_t=0\), the active forest is a disjoint union of \(K_1\) and \(K_2\)
components and the process stops.

For each lower type \(\rho\), set
\[
        g_{t,\rho}=A_t\eta_{t,\rho}A_t.
\]
Again by Lemma~\ref{lem:leaf-diagonal}, \(g_{t,\rho}\) is diagonal and
\[
        (g_{t,\rho})_{vv}=|N_{F_t}(v)\cap L_{t,\rho}^{\rm peel}|,
\]
where \(L_{t,\rho}^{\rm peel}\) is the set of peelable leaves of type \(\rho\).
Therefore \(\Sel_k(g_{t,\rho})\) selects the vertices adjacent to exactly \(k\)
currently peeled leaves of type \(\rho\).

The forest updates are
\[
        A_{t+1}=(I-\eta_t)A_t(I-\eta_t),
\]
\[
        D_{t+1}=D_t-\eta_t-\sum_\rho g_{t,\rho}.
\]
The first formula deletes all edges incident with the peeled leaves.  The second
subtracts one from each peeled leaf and subtracts, from every surviving neighbor,
the number of incident peeled leaves.

If a surviving vertex had old lower type \(\sigma\) and is adjacent to
\(k_\rho\) peeled leaves of type \(\rho\) for each \(\rho\), its new lower type is the
rooted type
\[
        \mu=(\sigma;(k_\rho)_\rho).
\]
The corresponding active selector is
\[
        P_{t+1,\mu}
        =
        P_{t,\sigma}(I-\eta_t)
        \prod_\rho \Sel_{k_\rho}(g_{t,\rho}),
\]
where only the finitely many types present at stage \(t\) occur in the product.
Thus every lower rooted type selector created by the pruning process is an
explicit diagonal idempotent in \(\Aalg(F)\).

\subsection{Center selectors}
\label{subsec:center-selectors}

Let \(s\) be the first stage with \(\eta_s=0\).  The active forest consists of
isolated active vertices and active \(K_2\) components.  Put
\[
        Z_s=R_s\Sel_0(D_s),
        \qquad
        L_s=R_s\Sel_1(D_s),
        \qquad
        h_s=A_sL_sA_s,
        \qquad
        C_s=L_s\Sel_1(h_s).
\]
Thus \(Z_s\) selects active center vertices of \(K_1\) components and \(C_s\) selects
active center-edge vertices of \(K_2\) components.

For each rooted type \(\rho\), the vector
\[
        z_\rho=Z_sP_{s,\rho}\one
\]
selects all single-center components whose center has lower rooted type \(\rho\).
These are center orbits of the forest.

For center edges, define the diagonal selector
\[
        E_{\rho\mid\sigma}
        =
        P_{s,\rho}C_s\Sel_1(A_sP_{s,\sigma}C_sA_s).
\]
It selects active \(K_2\) endpoints of type \(\rho\) whose center-edge neighbor has
type \(\sigma\).  If \(\rho\ne\sigma\), then
\[
        E_{\rho\mid\sigma}\one,
        \qquad
        E_{\sigma\mid\rho}\one
\]
are the two center orbits in all components with center types \(\{\rho,\sigma\}\).
If \(\rho=\sigma\), then
\[
        E_{\rho\mid\rho}\one
\]
selects both endpoints of each symmetric center edge, and these endpoints form a
single orbit.  All these seed orbit indicators belong to \(M_F\).

\subsection{Reverse propagation of orbit indicators}
\label{subsec:reverse-propagation}

Suppose that at stage \(t+1\) an active parent orbit \(O\) has already been
constructed, so that \(\one_O\in M_F\).  Let \(\eta_{t,\rho}\) be the selector of
peelable leaves of type \(\rho\) removed at stage \(t\).  Then
\[
        q=\eta_{t,\rho}A_t\one_O
\]
belongs to \(M_F\).  For a peeled leaf \(x\) of type \(\rho\), the coordinate
\((A_t\one_O)_x\) is \(1\) precisely when the unique active neighbor of \(x\) lies in
\(O\), and is \(0\) otherwise.  Hence
\[
        q=\one_{\{x\in L_{t,\rho}^{\rm peel}:p(x)\in O\}},
\]
where \(p(x)\) is the active parent of \(x\) at the time of peeling.

\subsection{Orbit classification in a forest}

Root every tree component at its center vertex, or at its center edge.  For a
non-center vertex \(v\), let \(p(v)\) be its parent and let \(\lambda(v)\) be the lower
rooted type of the component hanging below \(v\) away from the center.

\begin{lemma}[Rooted orbit classification]
\label{lem:rooted-orbit-classification}
Two non-center vertices \(v,w\) of a forest \(F\) are in the same automorphism orbit
if and only if
\[
        p(v)\text{ and }p(w)\text{ are in the same orbit}
        \quad\text{and}\quad
        \lambda(v)=\lambda(w).
\]
The center vertices are classified by the seed orbits described in
Subsection~\ref{subsec:center-selectors}.
\end{lemma}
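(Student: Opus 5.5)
Every automorphism of \(F\) maps each tree component onto an isomorphic component and carries centers to centers (the center is characterized metrically, as the set of vertices of minimum eccentricity), so it preserves the rooted structure. The plan is to prove the two directions separately and then deal with the center vertices on their own.

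For necessity, suppose \(\phi\in\Aut(F)\) and \(\phi(v)=w\). Then \(\phi\) sends the center of the component of \(v\) to the center of the component of \(w\). Since the parent \(p(v)\) is the neighbor of \(v\) on the unique path toward the center, it follows that \(\phi(p(v))=p(w)\), so the parents lie in the same orbit. The branch below \(v\) is the component of \(F-p(v)v\) containing \(v\). It is therefore mapped isomorphically, as a rooted tree, onto the branch below \(w\), which gives \(\lambda(v)=\lambda(w)\).

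For sufficiency, pick \(\psi\in\Aut(F)\) with \(\psi(p(v))=p(w)\). Then \(\psi(v)=v'\) is a child of \(p(w)\) with \(\lambda(v')=\lambda(v)=\lambda(w)\). Both \(v'\) and \(w\) are children of the same vertex \(p(w)\), and their branches are isomorphic rooted trees. Define \(\chi\) to swap the branch below \(v'\) with the branch below \(w\) via a rooted isomorphism, and to fix everything else. This \(\chi\) is an automorphism: the two branches are disjoint and each attaches to the rest of \(F\) only through its root's edge to \(p(w)\). If \(v'=w\), take \(\chi=I\). Then \(\chi\psi\) maps \(v\) to \(w\).

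For center vertices, first note that components are isomorphic exactly when their rooted types at the center agree, since a component is determined by its rooted type. With that in hand, the classification is read off the seed construction.
\begin{itemize}
\item For a single-center component, the center orbit consists of all centers of components isomorphic to it, i.e.\ of those with the same lower type \(\rho\). This is exactly what \(z_\rho\) selects.
\item For a center edge \(ab\), with branch types \(\lambda(a)=\rho\) and \(\lambda(b)=\sigma\) obtained by removing the edge \(ab\):
  \begin{itemize}
  \item an automorphism either preserves or swaps the endpoints;
  \item swapping is possible iff \(\rho=\sigma\), by the same branch-exchange argument;
  \item hence the orbits are those of \(E_{\rho\mid\sigma}\one\) and \(E_{\sigma\mid\rho}\one\) if \(\rho\neq\sigma\), and the single orbit of \(E_{\rho\mid\rho}\one\) if \(\rho=\sigma\).
  \end{itemize}
\end{itemize}
Assuming, as will be justified below, that \(P_{s,\rho}\) at a center records its full rooted type toward the rest of its component (in the edge case, its side of the center edge), these orbits agree exactly with the seed selectors of Subsection~\ref{subsec:center-selectors}.

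The main obstacle is matching the pruning-defined types with genuine rooted isomorphism types. I would prove by induction on the stage \(t\) that a vertex with \(P_{t,\rho}\) set has type \(\rho\) encoding exactly the rooted isomorphism class of its already-peeled subtree. The inductive step uses the recursion \(\mu=(\sigma;(k_\rho)_\rho)\), since a rooted tree is determined by the multiset of its children's rooted types. The one point needing care is that all leaves of a rooted branch are peeled before its root, so peeling times match heights. This holds because leaves are peeled from every component simultaneously and the \(K_2\) guard \(C_t\) halts exactly at the center.
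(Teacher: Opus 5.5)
Your proof is correct and follows essentially the same route as the paper. Necessity comes from automorphisms preserving centers, parents and branch types. Sufficiency composes an automorphism moving $p(v)$ to $p(w)$ with a local exchange of rooted branches. Centers are handled by the center classification together with the endpoint swap. Your explicit swap of the sibling branches below $p(w)$ makes the paper's ``local replacement'' step precise. Your induction identifying the pruning selectors $P_{t,\rho}$ with rooted isomorphism types supplies a point the paper leaves implicit.
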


\begin{proof}
Automorphisms preserve component centers, the parent relation, and rooted lower
types.  This gives the necessity.

Conversely, assume \(p(v)\) and \(p(w)\) are in the same orbit and
\(\lambda(v)=\lambda(w)\).  Choose an automorphism sending \(p(v)\) to \(p(w)\).
Inside the rooted branch below the parent, replace the image of the branch rooted
at \(v\) by a rooted isomorphism from the branch at \(v\) to the branch at \(w\).
Because equal lower rooted types occur as freely permutable rooted branches at a
fixed parent type, this local replacement extends to an automorphism of the whole
forest.  The center assertion follows from the usual classification of tree
centers: a component has either one center vertex or one center edge, and center
edges with equal endpoint rooted types have the endpoint-swap automorphism.
\end{proof}

\begin{theorem}[Forest orbit module theorem]
\label{thm:forest-module}
For every forest \(F\),
\[
        \Aalg(F)\one=U_F.
\]
In particular, for every tree \(T\),
\[
        \Aalg(T)\one=U_T.
\]
\end{theorem}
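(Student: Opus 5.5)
Since the inclusion $\Aalg(F)\one\subseteq U_F$ was already established in Section~\ref{sec:algebra}, it suffices to show that every orbit indicator $\one_O$ lies in $M_F=\Aalg(F)\one$. The plan is to run the guarded pruning of Subsection~\ref{subsec:guarded-pruning} until the first stage $s$ with $\eta_s=0$. We first record that every matrix produced there ($A_t$, $D_t$, $L_t$, $h_t$, $C_t$, $\eta_{t,\rho}$, $g_{t,\rho}$, $P_{t,\rho}$) lies in $\Aalg(F)$. This holds by induction on $t$: each is obtained from earlier ones by products, sums and the polynomials $\Sel_k$, and the entries to which $\Sel_k$ is applied are nonnegative integers bounded by $|V|$. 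We then verify by induction that these matrices have the claimed combinatorial meaning. The key input is Lemma~\ref{lem:leaf-diagonal}, applied to the current active forest $F_t$, which is again a forest, since deleting leaf edges preserves acyclicity. Alongside this we check that the type $\mu=(\sigma;(k_\rho)_\rho)$ recorded for a surviving vertex agrees with the rooted isomorphism type $\lambda$ of its lower branch away from the center. This agreement is an induction on depth, using that a rooted tree is determined by its multiset of child rooted types. It also uses the fact that leaf stripping which never removes the terminal $K_1$ or $K_2$ exactly realizes rooting at the center, and that all vertices at the same height from the center are peeled at the same stage.

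Next we would obtain the seed orbits. By Subsection~\ref{subsec:center-selectors}, the vectors $z_\rho$ and $E_{\rho\mid\sigma}\one$ lie in $M_F$. We then identify them as exactly the automorphism orbits of center vertices. An automorphism of $F$ permutes components and maps centers to centers. Two components are isomorphic precisely when their centered rooted types agree: for a single center, the type $\rho$ of the root; for a center edge, the unordered pair $\{\rho,\sigma\}$. This gives the center part of Lemma~\ref{lem:rooted-orbit-classification}.

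We then perform reverse propagation by downward induction on the stage index. Assume every orbit consisting of vertices still active at stage $t+1$ has its indicator in $M_F$. Each non-center vertex $x$ peeled at stage $t$ has its parent $p(x)$ active at stage $t+1$ and has type $\lambda(x)=\rho$. By Subsection~\ref{subsec:reverse-propagation}, the vector $\eta_{t,\rho}A_t\one_O\in M_F$ is the indicator of $\{x:\lambda(x)=\rho,\ p(x)\in O\}$. Lemma~\ref{lem:rooted-orbit-classification} says this set is precisely one automorphism orbit, provided it is nonempty. Running from $t=s-1$ down to $t=0$ covers every vertex, so $U_F\subseteq M_F$. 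The tree case is the special case of a single component.

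The main obstacle I expect is the bookkeeping that identifies the pruning types with genuine rooted types relative to the center, across different components that may have different heights. A vertex peeled at stage $t$ must have its parent still active at stage $t+1$. Its recorded type must also equal its full lower branch type, which requires that all its children were peeled earlier. Both facts follow from the invariant that at stage $t$ the active leaves that are not in a terminal $K_2$ are exactly the vertices whose lower branch has height $t$. I would prove this invariant first, separately from the algebra.
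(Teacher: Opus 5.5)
Your proposal is correct and follows essentially the same route as the paper: guarded pruning inside $\Aalg(F)$, seed orbit indicators $z_\rho$ and $E_{\rho\mid\sigma}\one$ at the terminal stage, reverse propagation via $\eta_{t,\rho}A_t\one_O$ from stage $s$ down to stage $0$, and Lemma~\ref{lem:rooted-orbit-classification} to identify the constructed sets as orbits; you additionally make explicit the stage/height bookkeeping that the paper leaves implicit. One wording fix: a vertex's peel stage is the height of its lower branch, as your final paragraph correctly states, not its distance from the center, so your earlier phrase ``same height from the center'' must be read in that sense (read as depth, it would be false).
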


\begin{proof}
The inclusion \(\Aalg(F)\one\subseteq U_F\) follows from the observation in Section~\ref{sec:algebra} that automorphisms commute with \(A,D\) and fix \(\one\).
For the reverse inclusion, the guarded pruning process constructs all lower
rooted type selectors inside \(\Aalg(F)\).  At the terminal stage,
Subsection~\ref{subsec:center-selectors} gives all center orbit indicators as vectors in
\(M_F\).  Applying the reverse propagation step of
Subsection~\ref{subsec:reverse-propagation} from the terminal stage back to stage \(0\)
constructs every non-center orbit indicator.  By
Lemma~\ref{lem:rooted-orbit-classification}, the constructed sets are exactly the
automorphism orbits of the forest.  Thus \(U_F\subseteq M_F\).
\end{proof}

\section{Tree quotient full-matrix rigidity}
\label{sec:tree-quotient-full-matrix}

The orbit-module theorem says that the vector \(\one\) is cyclic for the action of
\(\langle A(T),D(T)\rangle\) on the orbit module.  For connected trees a stronger
statement holds: the entire quotient representation is the full matrix algebra.

Let \(T\) be a connected tree and let \(O_1,\ldots,O_m\) be the orbits of
\(\Aut(T)\).  Put
\[
        S=(\one_{O_1},\ldots,\one_{O_m}).
\]
The orbit partition is equitable, and the cells are degree-homogeneous, so there
are unique matrices \(B_T\) and \(\Delta_T\) with
\[
        A(T)S=SB_T,\qquad D(T)S=S\Delta_T,
\]
where \(\Delta_T\) is diagonal.  Define the quotient algebra
\[
        Q_T:=\langle I_m,B_T,\Delta_T\rangle\subseteq M_m(\R).
\]

\begin{theorem}[Tree quotient full-matrix theorem]
\label{thm:tree-quotient-full-matrix}
For every finite connected tree \(T\),
\[
        Q_T=M_m(\R).
\]
Equivalently,
\[
        \langle A(T),D(T)\rangle|_{U_T}=\End(U_T).
\]
\end{theorem}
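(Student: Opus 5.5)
We will prove that $Q_T$ is irreducible on $\R^m$, show that every matrix unit $e_{ij}$ lies in $Q_T$, and conclude $Q_T=M_m(\R)$. Theorem~\ref{thm:forest-module} will supply cyclicity. Since $S$ has full column rank, $M_T=S(Q_T\one_m)$ and $M_T=U_T=\operatorname{im}S$ together give $Q_T\one_m=\R^m$. So $\one_m$ is a cyclic vector for $Q_T$. Writing $\one_O$ for the orbit indicator, cyclicity gives, for each orbit $O_j$, an element $p_j\in Q_T$ with $p_j\one_m=e_j$.

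The second ingredient is the rank-one projection onto $\R\one_m$ inside $Q_T$. The quotient inherits a Laplacian $B_L:=\Delta_T-B_T$, and its kernel is $\R\one_m$. To see this, note that $L S=S B_L$. If $B_Lx=0$, then $L(Sx)=0$, so $Sx\in\R\one$ by connectedness, and hence $x\in\R\one_m$.

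$B_L$ is not symmetric, but it is symmetric with respect to the weight $N=\diag(|O_1|,\dots,|O_m|)$. Indeed, $S^TS=N$ and $NB_T=S^TAS$ is symmetric. It follows that $N^{1/2}B_LN^{-1/2}$ is symmetric and $B_L$ is diagonalizable with real spectrum. Therefore the spectral projection $E_0$ of $B_L$ onto its kernel is a polynomial in $B_L$, and so lies in $Q_T$. Concretely,
\[
E_0=\frac{1}{n}\,\one_m\one_m^TN,
\]
which is the orthogonal projection onto $\R\one_m$ for the inner product $\langle x,y\rangle_N=x^TNy$.

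Next, $Q_T$ is closed under the $N$-adjoint $X^\ast=N^{-1}X^TN$. This holds because $B_T^\ast=B_T$ and $\Delta_T^\ast=\Delta_T$, the latter since both $\Delta_T$ and $N$ are diagonal.

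With these in hand, mimic Theorem~\ref{thm:principal-ideal}. For $u=p\one_m$ and $v=q\one_m$ with $p,q\in Q_T$,
\[
pE_0q^\ast=\frac{1}{n}\,(p\one_m)(q^{\ast\ast}\one_m)^{T}N=\frac1n\,uv^TN .
\]
Taking $u=e_i$, $v=e_j$ and using $Q_T\one_m=\R^m$, this gives $e_ie_j^TN=|O_j|\,e_{ij}\in Q_T$. Hence every matrix unit lies in $Q_T$, and $Q_T=M_m(\R)$. The equivalent formulation follows by transporting through the isomorphism $S:\R^m\to U_T$, which intertwines $B_T,\Delta_T$ with $A|_{U_T},D|_{U_T}$.

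The main obstacle is checking that $E_0$ genuinely lies in $Q_T$, which is the quotient analogue of $J/n\in\Aalg(G)$. The cleanest route is the $N$-symmetrization argument above. An alternative is to observe that $\Aalg(T)$ restricted to the invariant subspace $U_T$ equals $S Q_T S^{+}$ and contains $J|_{U_T}$. Everything else reduces to Theorem~\ref{thm:forest-module}, which carries the real tree-specific content, and to adjoint-closure bookkeeping.
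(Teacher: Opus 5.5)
Your proof is correct, and it takes a genuinely different route from the paper's.

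The paper reruns guarded pruning on the quotient. It uses Lemma~\ref{lem:rooted-quotient-geometry} to see that \(B_tXB_t\) and \(XB_tE_pB_tX\) are diagonal with integer entries, so that selector polynomials produce every coordinate idempotent \(E_i\) (Lemma~\ref{lem:quotient-idempotents}). It then gets \(E_{ij}\) from \(E_iB_TE_j=(B_T)_{ij}E_{ij}\) for adjacent orbits and composes along quotient paths.

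You instead transport Theorem~\ref{thm:principal-ideal} to the quotient. Your three ingredients are cyclicity of \(\one_m\) from \(M_T=U_T\), the rank-one \(N\)-orthogonal projection \(E_0=\frac1n\one_m\one_m^TN\) obtained as a polynomial in \(\Delta_T-B_T\), and closure of \(Q_T\) under \(X\mapsto N^{-1}X^TN\). Every step checks out: \(NB_T=S^TAS\), \(\ker(\Delta_T-B_T)=\R\one_m\) by connectedness, and \(pE_0q^\ast=\frac1n(p\one_m)(q\one_m)^TN\). Your closing remark even gives a one-line proof of the second formulation, since Theorem~\ref{thm:principal-ideal} already says that \(\Aalg(T)J\Aalg(T)\) acts on \(M_T=U_T\) as \(\End(U_T)\).

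Your route places all the tree-specific content in Theorem~\ref{thm:forest-module}, and it shows that the conclusion holds for every connected AD-observable graph. Conversely, \(Q_G=M_m(\R)\) forces \(Q_G\one_m=\R^m\), i.e.\ \(M_G=U_G\). So for connected graphs the full-matrix property is exactly AD-observability, which sharpens the paper's remark that the conclusion is ``tree-specific''.

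The paper's constructive route buys something yours does not. Your \(p_j\) with \(p_j\one_m=e_j\) exist only relative to the already known orbit basis. The paper's \(E_i\) come out of a basis-free selector procedure in the represented pair \((B_T,\Delta_T)\). That is precisely what Lemma~\ref{lem:intrinsic-weighted-quotient} and Section~\ref{sec:algorithmic-extraction} reuse to recover the weighted orbit quotient from the abstract pointed representation.
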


We give a constructive proof.  The proof produces every coordinate idempotent on
the orbit quotient and then every matrix unit.  Thus no appeal to the complex
Burnside theorem, or to real Wedderburn classification, is needed.

\begin{lemma}[Rooted quotient geometry]
\label{lem:rooted-quotient-geometry}
Root a connected tree at its center vertex or at its center edge.  The support of
its orbit quotient is a tree rooted at a vertex or at an edge, except that a
bicentral edge whose two endpoint rooted branches are isomorphic is represented
by a loop at the single root orbit.  After deleting this possible loop, every
non-root quotient vertex has a unique parent.  Moreover, if quotient vertices
$i,i'$ are children of the same parent quotient vertex and have the same lower
rooted type, then $i=i'$.
\end{lemma}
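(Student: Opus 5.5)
The plan is to read the support of the quotient directly off the equitable orbit partition, and then transport the rooted structure of $T$ to it using Lemma~\ref{lem:rooted-orbit-classification}. By definition $(B_T)_{ij}$ is the number of neighbours in $O_j$ of any vertex of $O_i$. So the support graph has an edge $i\sim j$ exactly when some edge of $T$ joins $O_i$ to $O_j$, and it has a loop at $i$ exactly when some edge of $T$ lies inside $O_i$.

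\textbf{Step 1: the root orbits.} Automorphisms preserve the center. If the center is a single vertex $c$, then $\{c\}$ is an orbit and we take it as the root. If the center is an edge $c_1c_2$, there are two cases. When the two endpoint rooted branches are non-isomorphic, $\{c_1\}$ and $\{c_2\}$ are distinct orbits, joined by a quotient edge, and this edge is the root edge. When the branches are isomorphic, the endpoint swap makes $\{c_1,c_2\}$ a single orbit. The center edge then lies inside one orbit, which gives a loop at that root orbit.

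\textbf{Step 2: no other loops.} Every other edge of $T$ joins a non-center vertex $v$ to its parent $p(v)$, and $v$ is strictly farther from the center than $p(v)$. Automorphisms preserve distance to the center, so $v$ and $p(v)$ lie in different orbits. Hence the only possible loop is the one from Step~1.

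\textbf{Step 3: unique parents.} Each non-root orbit $O$ consists of non-center vertices at a common depth. By the necessity direction of Lemma~\ref{lem:rooted-orbit-classification}, the parents $p(v)$ for $v\in O$ all lie in a single orbit, and we call it $\mathrm{par}(O)$. If $O_j$ is adjacent to $O$ in the quotient, a vertex of $O$ has a neighbour in $O_j$. That neighbour is either its parent, so $O_j=\mathrm{par}(O)$, or one of its children, so $O_j$ is strictly deeper. Directing every non-loop quotient edge from shallower to deeper therefore gives each non-root vertex exactly one incoming edge, namely from $\mathrm{par}(O)$. Since depth strictly increases along these edges, following parents always terminates at the root vertex or root edge. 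So the support, after deleting the loop, is a tree rooted as claimed. An alternative count gives the same conclusion: there are $m$ vertices and $m-1$ parent edges (or $m-2$ plus the root edge), and the graph is connected because $T$ is.

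\textbf{Step 4: siblings with equal type coincide.} Suppose $O_i$ and $O_{i'}$ are both children of $O_p$ and have the same lower rooted type. Pick $v\in O_i$ and $v'\in O_{i'}$; their parents $p(v)$ and $p(v')$ lie in the same orbit $O_p$. The sufficiency direction of Lemma~\ref{lem:rooted-orbit-classification} then puts $v$ and $v'$ in the same orbit, so $i=i'$.

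The main obstacle is bookkeeping rather than anything deep. One has to check that ``lower rooted type'' attached to a quotient vertex is well defined, which holds because it is an $\Aut(T)$-invariant of the vertex. One also has to keep the bicentral symmetric case separate, so that the loop is never mistaken for a parent edge and the root orbit itself has no parent.
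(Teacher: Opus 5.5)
Your proposal is correct and follows essentially the same route as the paper. Both arguments rest on the automorphism invariance of the center, the parent relation and the lower rooted type, together with the two directions of Lemma~\ref{lem:rooted-orbit-classification}; you additionally spell out what the paper leaves implicit, namely that depth preservation rules out every loop except the symmetric center edge and that the non-loop quotient edges are exactly the parent edges (plus the possible root edge), so the support is a tree.
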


\begin{proof}
The parent relation and lower rooted type are invariant under automorphisms.
Lemma~\ref{lem:rooted-orbit-classification} gives, on non-center orbits,
\[
        i=i'
        \Longleftrightarrow
        p(i)=p(i')\quad\text{and}\quad \lambda(i)=\lambda(i') .
\]
Thus every non-root quotient vertex has a unique parent, and there is at most one
child of a fixed lower type below a fixed parent.  At the center there is either
one root orbit, two root orbits joined by the center edge, or one root orbit with
a loop representing a symmetric center edge.  This proves the asserted quotient
geometry.
\end{proof}

\begin{lemma}[Quotient idempotent extraction]
\label{lem:quotient-idempotents}
For a connected tree \(T\), every coordinate idempotent \(E_i\in M_m(\R)\) of the
orbit quotient belongs to \(Q_T=\langle I_m,B_T,\Delta_T\rangle\).
\end{lemma}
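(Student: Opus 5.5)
The plan is to transport the guarded pruning of Subsection~\ref{subsec:guarded-pruning} to the orbit quotient, using only the quotient operators $B_T$ and $\Delta_T$. The bridge is the intertwining relation $A(T)S=SB_T$, $D(T)S=S\Delta_T$. Since $S$ has linearly independent columns, the map $X\mapsto X|_{U_T}$ sends $\Aalg(T)$ onto $Q_T$ as an algebra homomorphism: $p(A,D)S=Sp(B_T,\Delta_T)$. Every diagonal idempotent of $\Aalg(T)$ that is constant on orbits restricts to the diagonal coordinate idempotent of $Q_T$ supported on the corresponding orbits. This holds because a diagonal matrix $P=\diag(\one_Y)$ with $Y$ a union of orbits satisfies $PS=S\,\diag(\one_{\{i:O_i\subseteq Y\}})$.

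The first step is to check that all selectors produced by the pruning are of this form. The matrices $A_t,D_t,R_t,L_t,h_t,C_t,\eta_{t,\rho},g_{t,\rho},P_{t,\rho}$ are polynomials in $A,D$, and each of the diagonal ones is automorphism-invariant, hence constant on orbits. So each $P_{t,\rho}$, together with the center selectors $Z_sP_{s,\rho}$ and $E_{\rho\mid\sigma}$ of Subsection~\ref{subsec:center-selectors}, restricts to a coordinate idempotent in $Q_T$. These are selectors of lower rooted types, not yet of single orbits. The key input is Lemma~\ref{lem:rooted-quotient-geometry}: under the parent map, an orbit is determined by its parent orbit together with its lower type.

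The second step extracts $E_i$ by induction from the root outward along the quotient tree. At the root, $T$ is connected, so there is a single component. The seed selectors (for $Z_sP_{s,\rho}$, or for $E_{\rho\mid\sigma}$ and $E_{\sigma\mid\rho}$, or for $E_{\rho\mid\rho}$ in the loop case) each select exactly one root orbit, which gives the root $E_i$. For the inductive step, suppose $E_j$ is known for a parent orbit $j$, and let $i$ be a child of lower type $\rho$ peeled at stage $t$. Form the diagonal operator
\[
\Pi=\eta_{t,\rho}\,A_t\,E_jA_t\,\eta_{t,\rho},
\]
computed in the quotient with $\eta_{t,\rho},A_t$ replaced by their images. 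I expect $\Pi$ to be diagonal with entry equal to the number of neighbors in orbit $j$ of a peeled type-$\rho$ leaf. By the leaf diagonal argument (Lemma~\ref{lem:leaf-diagonal}), this number is $1$ if the leaf's parent lies in $j$ and $0$ otherwise. In the upstairs picture, $\eta A_t\diag(\one_{O_j})A_t\eta$ is a genuine diagonal $0/1$ matrix constant on orbits, which is the matrix form of the reverse propagation step of Subsection~\ref{subsec:reverse-propagation}. By Lemma~\ref{lem:rooted-quotient-geometry}, its support is exactly the single orbit $O_i$. Hence the restriction of this upstairs matrix is $E_i$. To keep the argument rigorous, I will work upstairs with $\diag(\one_{O_j})$, which lies in $\Aalg(T)$ by induction (the root case uses the selectors above), and then restrict.

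The main obstacle is exactly this inductive step. Knowing $E_j\in Q_T$ only in the quotient, I must ensure that a matching diagonal element of $\Aalg(T)$ exists upstairs, so that the leaf diagonal lemma can be applied there. I would therefore strengthen the induction hypothesis to $\diag(\one_{O_j})\in\Aalg(T)$. The step above preserves this, since $\eta A_t\diag(\one_{O_j})A_t\eta$ is a product of elements of $\Aalg(T)$. Care is also needed in the symmetric center-edge (loop) case and in checking that $A_t$ connects a peeled leaf only to its active parent, both of which are built into the pruning.
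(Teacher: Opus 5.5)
Restricting to $U_T$ is a sound bridge. It is an algebra homomorphism of $\Aalg(T)$ onto $Q_T$ that sends $A,D$ to $B_T,\Delta_T$, and it sends each orbit-constant diagonal idempotent to the matching coordinate idempotent. So the type selectors and center seeds land in $Q_T$, and your root case is correct.

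\textbf{The gap is the inductive step.} You claim that $\Pi=\eta_{t,\rho}A_t\,\diag(\one_{O_j})\,A_t\,\eta_{t,\rho}$ is a diagonal $0/1$ matrix, and this is false. Lemma~\ref{lem:leaf-diagonal} needs the \emph{middle} factor to be supported on leaves. In $\Pi$ the leaves are on the outside and the parent orbit is in the middle. Two type-$\rho$ leaves $u\ne v$ hanging from the same $w\in O_j$ give $\Pi_{uv}=1$. In fact $\Pi=\sum_{w\in O_j}\one_{C_w}\one_{C_w}^T$, where $C_w$ is the set of type-$\rho$ children of $w$.

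\textbf{The strengthened hypothesis is itself false.} No other construction can maintain $\diag(\one_{O_j})\in\Aalg(T)$. Take the double star with adjacent vertices $p_1,p_2$, leaves $a_1,a_2$ at $p_1$, and leaves $b_1,b_2,b_3$ at $p_2$. The vectors $e_{a_1}-e_{a_2}$ and $e_{b_1}-e_{b_2}$ span a subspace on which $A$ acts as $0$ and $D$ as the identity. Every element of $\Aalg(T)$ therefore acts there as a scalar. The matrix $\diag(\one_{\{a_1,a_2\}})$ does not, so it is not in $\Aalg(T)$, although $\{a_1,a_2\}$ is an orbit. In this example your $\Pi$ is $\one_{\{a_1,a_2\}}\one_{\{a_1,a_2\}}^T$, and $E_{\{a_1,a_2\}}$ has no diagonal preimage at all.

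\textbf{The repair is to track only images in $Q_T$.} This is what the paper does by running the pruning directly on the quotient.
\begin{enumerate}[leftmargin=2em]
    \item Take the induction hypothesis to be $E_j\in Q_T$.
    \item Form $Z=XB_tE_jB_tX$, where $X$ is the image of $\eta_{t,\rho}$.
    \item In the quotient, the type-$\rho$ children of $O_j$ are collapsed into one coordinate. Lemma~\ref{lem:rooted-quotient-geometry} then makes $Z$ diagonal with a single nonzero entry $Z_{ii}=(B_t)_{ij}(B_t)_{ji}=(B_t)_{ji}\ge 1$. This entry counts the type-$\rho$ children of a vertex of $O_j$; it is not $1$, as you state.
    \item Hence $E_i=\sum_{k\ge1}\Sel_k(Z)$, or simply $E_i=Z/(B_t)_{ji}$.
\end{enumerate}
Equivalently, your upstairs $\Pi$, built from any preimage of $E_j$ instead of $\diag(\one_{O_j})$, restricts to $(B_t)_{ji}E_i$.

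Your homomorphism remains a clean way to justify that the quotient pruning matrices lie in $Q_T$. The orbit idempotents, however, generally have only non-diagonal preimages upstairs. In the double star example this preimage is $\tfrac12\one_{\{a_1,a_2\}}\one_{\{a_1,a_2\}}^T$.
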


\begin{proof}
Apply the guarded pruning formulas of Section~\ref{sec:trees} to the quotient.
At every stage
\[
        B_{t+1}=(I-\eta_t)B_t(I-\eta_t),\qquad
        \Delta_{t+1}=\Delta_t-\eta_t-\sum_\rho B_t\eta_{t,\rho}B_t,
\]
so all matrices constructed remain in \(Q_T\).  If \(X\) selects current quotient
leaves of lower type \(\rho\), Lemma~\ref{lem:rooted-quotient-geometry} gives
\[
        B_tXB_t\in Q_T\cap \operatorname{Diag}_m(\R),
\]
and its diagonal entries are integers in \(\{0,\ldots,|V(T)|\}\).  Hence the
idempotents \(\Sel_k(B_tXB_t)\) used in the pruning belong to \(Q_T\).  Thus the
same formulas as in Section~\ref{sec:trees} produce the diagonal idempotents of
the terminal center orbit or orbits.

Now propagate away from the center.  If \(E_p\in Q_T\) is the idempotent of a
known parent quotient vertex and \(X\) selects children of a fixed lower type,
then
\[
        Z=XB_tE_pB_tX\in Q_T\cap \operatorname{Diag}_m(\R).
\]
By Lemma~\ref{lem:rooted-quotient-geometry}, the support of \(Z\) is either empty
or the single desired child quotient vertex.  Therefore
\[
        \sum_{k=1}^{|V(T)|}\Sel_k(Z)
\]
is that child's coordinate idempotent.  Induction over the rooted quotient tree
gives all \(E_i\).
\end{proof}

\begin{proof}[Proof of Theorem~\ref{thm:tree-quotient-full-matrix}]
By Lemma~\ref{lem:quotient-idempotents}, each coordinate idempotent \(E_i\) belongs to
\(Q_T\).  If quotient vertices \(i,j\) are adjacent, then
\[
        E_iB_TE_j=(B_T)_{ij}E_{ij},
\]
with \((B_T)_{ij}>0\), so the directed matrix unit \(E_{ij}\) lies in \(Q_T\).
The support quotient is connected after ignoring the possible root loop.  For any
ordered pair \((i,j)\), choose a quotient path
\[
        i=i_0,i_1,\ldots,i_r=j.
\]
Then the product
\[
        E_{i_0i_1}E_{i_1i_2}\cdots E_{i_{r-1}i_r}=E_{ij}
\]
belongs to \(Q_T\).  Thus every matrix unit belongs to \(Q_T\), and therefore
\(Q_T=M_m(\R)\).
\end{proof}

In particular, for a connected tree \(T\), the simultaneous commutant of the
quotient operators is scalar:
\[
        \{X\in M_m(\R):XB_T=B_TX,\ X\Delta_T=\Delta_TX\}=\R I_m.
\]
This conclusion is tree-specific.  For general graphs the algebra induced by
\(A\) and \(D\) on the orbit quotient can be much smaller than a full matrix
algebra; the full-matrix conclusion for trees is a consequence of the rooted
leaf-pruning structure of tree orbit quotients.

\section{Tree reconstruction from principal moments}
\label{sec:tree-reconstruction}

\subsection{Orbit quotients and rank-one moments}

Let \(O_1,\ldots,O_m\) be the vertex orbits of a graph \(G\), and set
\(u_i=\one_{O_i}\).  The orbit partition is equitable.  Define
\[
        s_i=u_i^Tu_i=|O_i|,
        \qquad
        q_{ij}=u_i^TAu_j,
        \qquad
        b_{ij}=\frac{q_{ij}}{s_i}.
\]
Then \(b_{ij}\) is the number of neighbors in \(O_j\) of any fixed vertex in \(O_i\).
We call the data
\[
        (s_i,b_{ij})_{1\le i,j\le m}
\]
the weighted orbit quotient.  Diagonal entries are allowed; for a tree they occur
only when the two endpoints of the center edge lie in the same orbit.

If \(u_i=p_i(A,D)\one\), then these quotient data are rank-one moments:
\[
        s_i=\one^Tp_i(A,D)^Tp_i(A,D)\one,
\]
\[
        q_{ij}=\one^Tp_i(A,D)^TAp_j(A,D)\one.
\]
Thus the principal block together with the distinguished adjacency operator
recovers the weighted orbit quotient once orbit selectors are known.

\subsection{Orbit-quotient rigidity of trees}

\begin{theorem}[Orbit-quotient rigidity]
\label{thm:tree-quotient-rigidity}
A tree is determined up to isomorphism by its weighted orbit quotient.
\end{theorem}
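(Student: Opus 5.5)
We reconstruct the tree directly from the data \((s_i,b_{ij})\): unfold the weighted orbit quotient from its root, in the spirit of a universal cover, but stop at the finite rooted structure supplied by Lemma~\ref{lem:rooted-quotient-geometry}. The argument is really the converse of the fact that the quotient is a quotient. Any two trees with the same weighted orbit quotient are both isomorphic to one canonical tree built from that data, and hence to each other.

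\textbf{Step 1: locate the root from the data.} The support of the quotient (the pairs with \(b_{ij}>0\)) is connected, and after deleting a possible loop it is a tree by Lemma~\ref{lem:rooted-quotient-geometry}. I would read off the root type from three cases.
\begin{itemize}
\item If some \(b_{ii}>0\), then \(b_{ii}=1\), and the root is a symmetric center edge with \(s_i=2\).
\item Otherwise, if some adjacent pair \(i,j\) has \(s_i=s_j=1\) and this pair is the center of the support tree, then the root is a bicentral edge with two distinct orbits.
\item Otherwise there is a unique orbit with \(s_i=1\) that is the tree center. Single-element orbits are fixed vertices, and the center is fixed.
\end{itemize}
A cleaner route avoids this case analysis: the orbit partition of a tree is equitable, so the tree center can be computed on the quotient. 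Each stage of leaf stripping removes the orbits whose vertices have degree one in the remaining graph. The remaining degree of orbit \(i\) is \(\sum_j b_{ij}\) over surviving \(j\), so this stripping is computable from \((b_{ij})\) alone. This is exactly the guarded pruning of Section~\ref{sec:trees} run on \(B_T,\Delta_T\), and it identifies the center orbit(s).

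\textbf{Step 2: orient the quotient tree.} Once the root is known, every other orbit \(i\) has a unique parent orbit \(p(i)\) adjacent to it in the support, namely the neighbor nearer the root. Each vertex of \(O_i\) then has exactly \(b_{i,p(i)}=1\) neighbor in \(O_{p(i)}\), and \(b_{p(i),i}\) children in \(O_i\). Note \(b_{p(i),i}=s_i/s_{p(i)}\), which gives a consistency check. This step uses only that tree automorphisms preserve distance to the center, so neighbors in the quotient tree are parent/children as in the rooted tree.

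\textbf{Step 3: build the canonical tree.} Define \(T^\ast\) recursively from the root.
\begin{itemize}
\item Create the root vertex, or the two ends of the root edge.
\item Give each vertex of type \(i\) exactly \(b_{ij}\) children of type \(j\), for every child orbit \(j\) of \(i\).
\end{itemize}
In the original tree \(T\), the vertices of \(O_i\) have exactly this child structure, so a breadth-first matching from the root gives an isomorphism \(T\cong T^\ast\) by induction on depth: children of a vertex in \(O_i\) are partitioned into the orbits \(O_j\) with multiplicity \(b_{ij}\). Since \(T^\ast\) depends only on \((s_i,b_{ij})\), two trees with equal weighted orbit quotients, under any identification of orbit indices, are isomorphic.

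\textbf{Main obstacle.} The subtle point is Step 1: the quotient must determine the root unambiguously, including the loop case and the case of two distinct center orbits. The key fact is that equitable quotients commute with leaf stripping. I would prove that leaves of the current active tree form a union of current-orbit classes with quotient degree one, and that the stripping terminates at either a single orbit, a loop orbit, or an adjacent pair. The sizes \(s_i\) distinguish a symmetric center edge (\(s=2\) with a loop) from a central vertex (\(s=1\)), so the reconstruction is well defined.
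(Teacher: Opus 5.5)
Your proof is correct and follows essentially the paper's route. Both arguments strip leaves on the quotient using the active degrees $\sum_{j\in S}b_{ij}$, stopping at a $K_1$, loop, or adjacent-pair core, read off parent orbits from $b_{i,p(i)}=1$, and rebuild the tree; your top-down unfolding with a depth-by-depth isomorphism is the paper's bottom-up recursion $\tau_i=\bullet(\{\tau_j^{\,b_{ij}}\})$ run in the other direction. Use that leaf-stripping route for Step~1, because your three-case test is unnecessary and its last case never says how to pick out the center among what may be many orbits with $s_i=1$.
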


\begin{proof}
Let \(Q=(s_i,b_{ij})\) be the weighted orbit quotient of a tree.  For an active
set \(S\subseteq\{1,\ldots,m\}\), put
\[
        d_i(S)=\sum_{j\in S} b_{ij}.
\]
Starting with \(S_0=\{1,\ldots,m\}\), delete all quotient vertices with
\(d_i(S_t)=1\), except when the remaining active quotient is one of the two
center-edge cases
\[
        S=\{c\},\quad s_c=2,\quad b_{cc}=1,
        \qquad\text{or}\qquad
        S=\{c,c'\},\quad b_{cc'}=b_{c'c}=1.
\]
This is exactly the image of simultaneous leaf stripping on the original tree,
because an active vertex in orbit \(O_i\) has active degree \(d_i(S_t)\).  Hence
the process determines the center type.  If \(i\) is deleted at stage \(t\), its
unique active neighbor is the unique \(p(i)\in S_t\) with \(b_{i,p(i)}=1\); this
is the parent orbit of \(i\).

Define rooted types from the leaves inward.  If \(i\) is a deleted leaf orbit,
set \(\tau_i=\bullet\).  After all children of \(i\) have been assigned types,
set
\[
        \tau_i=\bullet\Bigl(\{\tau_j^{\,b_{ij}}:p(j)=i\}\Bigr),
\]
where the notation records the multiset of rooted child types attached to one
vertex of orbit \(O_i\).  This recursion is forced by \(Q\).  At the terminal
core, either a single center type \(\tau_c\) remains, or two rooted types are
glued across a center edge, or two copies of the same rooted type are glued
across a symmetric center edge represented by a loop \(b_{cc}=1\).  Thus \(Q\)
constructs a unique tree up to isomorphism.
\end{proof}

\begin{lemma}[Cyclic realization of the principal moment functional]
\label{lem:gns-principal-realization}
The functional
\[
        \mu_G(w)=\one^T w(A_G,D_G)\one,
        \qquad w\in\mathbb R\langle x,y\rangle,
\]
determines the pointed finite-dimensional cyclic representation
\[
        (M_G,A_G|_{M_G},D_G|_{M_G},\one,\langle\cdot,\cdot\rangle).
\]
More precisely, it determines it up to an isometry carrying $\one$ to $\one$ and
intertwining the two represented generators.
\end{lemma}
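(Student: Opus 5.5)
This is the GNS construction for a moment functional on the free algebra. We equip $\R\langle x,y\rangle$ with the involution that reverses words; it fixes $x$ and $y$, and it corresponds to transpose because $A_G$ and $D_G$ are symmetric. Evaluation $w\mapsto w(A_G,D_G)\one$ is a linear surjection $\pi_G:\R\langle x,y\rangle\to M_G$. The key identity is
\[
        \langle \pi_G(w),\pi_G(w')\rangle
        =\one^T w(A_G,D_G)^T w'(A_G,D_G)\one
        =\mu_G(w^*w').
\]
So the Gram form on $M_G$ is computed entirely by $\mu_G$.

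The plan is as follows.

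First, define the sesquilinear (here bilinear) form $\langle w,w'\rangle_\mu=\mu(w^*w')$ on $\R\langle x,y\rangle$. It is positive semidefinite because $\mu(w^*w)=\|\pi_G(w)\|^2$.

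Second, let $N_\mu=\{w:\mu(w^*w)=0\}$. By the identity above, $N_\mu=\ker\pi_G$. Hence $\pi_G$ induces an isometric isomorphism
\[
        \R\langle x,y\rangle/N_\mu\;\cong\;M_G .
\]
This quotient space and its inner product are built from $\mu$ alone.

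Third, check that left multiplication by $x$ and by $y$ preserves $N_\mu$. This follows directly from $\pi_G(xw)=A_G\pi_G(w)$ and $\pi_G(yw)=D_G\pi_G(w)$. Intrinsically, it also follows from Cauchy--Schwarz: $\mu((xw)^*xw)=\langle w,x^2w\rangle_\mu$. Under the isomorphism, these induced left multiplications become $A_G|_{M_G}$ and $D_G|_{M_G}$, and the class of the empty word maps to $\one$.

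Finally, suppose $\mu_G=\mu_H$. Then both pointed representations are isometrically isomorphic to the same quotient model. The composite map
\[
        \pi_H\circ\pi_G^{-1}:\;w(A_G,D_G)\one\longmapsto w(A_H,D_H)\one
\]
is therefore well defined, isometric, sends $\one$ to $\one$, and intertwines $A$ with $A$ and $D$ with $D$.

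There is no serious obstacle. The only points needing care are the well-definedness of this map, which is exactly $\ker\pi_G=N_\mu=\ker\pi_H$, and the fact that the adjoint of the word involution matches transpose. Both come from symmetry of $A$ and $D$. Finite-dimensionality is automatic, since $M_G\subseteq\R^V$.
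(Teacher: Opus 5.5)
Your proposal is correct and follows the same route as the paper: a finite GNS construction, with the form $\mu(w^{\rm rev}w')$ on $\R\langle x,y\rangle$, the quotient by its null space, left multiplication by $x,y$, and the class of the empty word as the cyclic vector. Identifying the null space with $\ker\pi_G$ and writing down the intertwiner $\pi_H\circ\pi_G^{-1}$ explicitly fills in what the paper only sketches, and it also shows that the completion step the paper mentions is unnecessary.
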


\begin{proof}
Form the pre-Hilbert space on $\mathbb R\langle x,y\rangle$ with bilinear
form
\[
        \langle p,q\rangle_\mu=\mu_G(p^{\rm rev}q).
\]
Quotient by the null space $\{p:\mu_G(p^{\rm rev}p)=0\}$ and complete; the result
is already finite-dimensional because it is represented by the vectors
$p(A_G,D_G)\one$ in $M_G$.  Left multiplication by $x$ and $y$ gives the induced
operators corresponding to $A_G$ and $D_G$, and the class of the empty word is
the cyclic vector.  This is the usual finite GNS construction for the real
symmetric moment functional.
\end{proof}

\begin{lemma}[Intrinsic recovery of the weighted quotient]
\label{lem:intrinsic-weighted-quotient}
For a tree $T$, the pointed cyclic representation
\[
        (M_T,A_T|_{M_T},D_T|_{M_T},\one,\langle\cdot,\cdot\rangle)
\]
determines the weighted orbit quotient of $T$, up to simultaneous relabeling of
its quotient vertices.
\end{lemma}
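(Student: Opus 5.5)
The plan is to run the constructive proof of Theorem~\ref{thm:forest-module} \emph{inside} the abstract pointed representation. The key observation is that every operator used in Section~\ref{sec:trees} is a noncommutative polynomial in $A_T$ and $D_T$. Examples are $L_t$, $h_t$, $C_t$, $\eta_{t,\rho}$, $g_{t,\rho}$, $P_{t,\rho}$, $Z_s$, $E_{\rho\mid\sigma}$ and the selector polynomials $\Sel_k^{(N)}$. Since $M_T$ is invariant under $\Aalg(T)$, for any polynomial $p$ we have $p(A_T,D_T)\one=p(a,d)e$, where $a=A_T|_{M_T}$, $d=D_T|_{M_T}$ and $e=\one$. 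So if the polynomials $p_i$ producing the orbit indicators $u_i=p_i(A_T,D_T)\one$ could be chosen from the representation alone, the vectors $u_i$ would be intrinsic. Then
\[
        s_i=\langle u_i,u_i\rangle,\qquad q_{ij}=\langle u_i,a u_j\rangle,\qquad b_{ij}=q_{ij}/s_i
\]
would give the weighted orbit quotient, and only the labeling of the $u_i$ would be arbitrary.

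The steps, in order, are as follows.
\begin{enumerate}[leftmargin=2em]
\item Recover $n=|V(T)|=\langle e,e\rangle$. This fixes a uniform range $N=n$ for every selector $\Sel_k^{(N)}$, because all diagonal count matrices in the pruning have integer entries in $\{0,\ldots,n\}$.
\item Make every branching decision of the algorithm intrinsic. The decisions are: whether $\eta_t=0$, which values $k_\rho$ occur in $\Sel_{k}(g_{t,\rho})$, which lower types $\rho$ are present, and which center case occurs. For diagonal idempotents $P$ in the algorithm, $P=0$ on $V$ iff $P\one=0$ iff $\|P(a,d)e\|^2=0$, and this norm is computable from $(a,d,e,\langle\cdot,\cdot\rangle)$. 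So the representation decides which type selectors are nonzero. Only nonzero ones are retained, and each new type $\mu=(\sigma;(k_\rho))$ is labeled by its recursive description.
\item At the terminal stage $s$, form the seed vectors $z_\rho$ and $E_{\rho\mid\sigma}e$, discarding the zero ones. Then apply the reverse propagation $q=\eta_{t,\rho}a\,u_O$ stage by stage, again keeping only the nonzero outputs.
\end{enumerate}
By Theorem~\ref{thm:forest-module} and Lemma~\ref{lem:rooted-orbit-classification}, the nonzero vectors so produced are exactly the orbit indicators $\one_{O_i}$. This holds because the construction is literally the one in the proof of the theorem, evaluated on $M_T$.

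For a second representation isometric to the first via a map fixing $e$ and intertwining the generators, the same algorithm makes the same decisions, since the zero tests are preserved. It therefore produces the images of the $u_i$ with the same type labels, and hence the same $(s_i,b_{ij})$. The only freedom is the order in which orbits are listed, which is the asserted relabeling.

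I expect step 2 to be the main obstacle. The proof of Theorem~\ref{thm:forest-module} chooses its polynomials with knowledge of $T$: which types occur, which multiplicities $k_\rho$ appear, and whether the core is a $K_1$ or a $K_2$. One must check that each such choice is witnessed by the vanishing or nonvanishing of some vector $p(a,d)e$, rather than by information living outside $M_T$. I would handle this by expanding each product $\prod_\rho\Sel_{k_\rho}(g_{t,\rho})$ over all tuples $(k_\rho)\in\{0,\ldots,n\}^{\#\text{types}}$ and testing each resulting vector for zero. This is legitimate because diagonal idempotents commute, so $P\one=0$ really certifies $P=0$ on all of $V$. The center-case test, $Z_s$ versus $C_s$ and $E_{\rho\mid\rho}$ versus $E_{\rho\mid\sigma}$, is handled in the same way.
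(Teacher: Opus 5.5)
Your proof is correct, and it uses the same principle as the paper: every step of the extraction is a fixed noncommutative polynomial in the generators, so it can be evaluated inside the abstract pointed representation and transported through an isometric intertwiner. The difference lies in which construction you transport, and in how explicitly you handle its dependence on $T$.

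\textbf{Which construction is transported.} The paper transports the quotient idempotent extraction of Lemma~\ref{lem:quotient-idempotents}. This yields the coordinate projections $P_i$ onto the lines $\R u_i$ as operators on $M_T$, and then $u_i=P_i\one$. That route ties the recovery to the full-matrix structure of Theorem~\ref{thm:tree-quotient-full-matrix}. You instead transport the vector-level construction behind Theorem~\ref{thm:forest-module} directly. You therefore need only $M_T=U_T$ and its explicit witnesses, not the quotient full-matrix machinery.

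\textbf{Dependence on $T$.} This is the more substantive gain. The polynomials actually used depend on $T$: which types occur, which multiplicities $k_\rho$ appear, and which center case holds. Transporting a list of polynomials chosen for $T$ only tells you that the images are orthogonal vectors in $U_{T'}$ summing to $\one$. It does not by itself make them the orbit indicators of $T'$. For that, one needs to know that $T'$ triggers the same choices. The paper's phrase ``expressed entirely by algebraic operations in the represented pair'' leaves this implicit. Your norm tests settle it, since $\|p(a,d)e\|^2=\mu_T(p^{\rm rev}p)$ is an invariant of the pointed isometry class. As a bonus, your run gives a canonical labeling of the orbits by rooted type, not merely one up to permutation.

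\textbf{Two minor points.}
\begin{itemize}
\item The reason $P\one=0$ forces $P=0$ is not commutativity. It is that each such $P$ is a $0/1$ diagonal matrix, so $P\one$ is the indicator of its support. Commutativity only ensures that your expanded products are again such idempotents.
\item Using $a$ in place of $A_t$ in the propagation step is harmless. The other neighbors of a leaf peeled at stage $t$ were peeled earlier, so they do not lie in any orbit that is still active at stage $t+1$.
\end{itemize}
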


\begin{proof}
By Theorem~\ref{thm:forest-module}, $M_T=U_T$.  Let
$u_i=\one_{O_i}$ be the orbit indicators.  In the basis $(u_i)$, the represented
operators $A_T|_{U_T}$ and $D_T|_{U_T}$ have matrices $B_T$ and $\Delta_T$,
respectively.  The idempotent-extraction procedure of
Lemma~\ref{lem:quotient-idempotents} is expressed entirely by algebraic
operations in the represented pair, together with the fixed level-selector
polynomials $\Sel_k^{(N)}$.  Therefore the same procedure, transported through
any isometric intertwiner of pointed cyclic representations, recovers the
one-dimensional coordinate projections $P_i$ onto the lines $\R u_i$, up to
permutation of the indices.

Since the cyclic vector is $\one=\sum_i u_i$, we have $u_i=P_i\one$.  Hence the
representation determines
\[
        s_i=\|P_i\one\|^2,
        \qquad
        q_{ij}=\langle P_i\one,\, A_T P_j\one\rangle .
\]
The weighted quotient entries are $b_{ij}=q_{ij}/s_i$.  Thus the weighted orbit
quotient is determined up to relabeling.
\end{proof}

\begin{theorem}[Principal AD moments determine trees]
\label{thm:principal-moments-determine-trees}
Let $T$ and $T'$ be finite trees.  Suppose that
\[
        \mu_T(w)=\mu_{T'}(w)
        \qquad\text{for every }w\in\mathbb R\langle x,y\rangle,
\]
where $\mu_G(w)=\one^T w(A_G,D_G)\one$.  Then $T\cong T'$.
\end{theorem}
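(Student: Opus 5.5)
The plan is to chain the three results just established: Lemma~\ref{lem:gns-principal-realization}, then Lemma~\ref{lem:intrinsic-weighted-quotient}, then Theorem~\ref{thm:tree-quotient-rigidity}.

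\emph{Step 1: equal moments give isomorphic pointed representations.} By Lemma~\ref{lem:gns-principal-realization}, the functional $\mu_T$ determines the pointed cyclic representation $(M_T,A_T|_{M_T},D_T|_{M_T},\one,\langle\cdot,\cdot\rangle)$ up to an isometry fixing the cyclic vector and intertwining the generators. I would make this explicit. Define
\[
\Phi\colon M_T\to M_{T'},\qquad \Phi\bigl(p(A_T,D_T)\one\bigr)=p(A_{T'},D_{T'})\one .
\]
Since $A$ and $D$ are symmetric, $p(A,D)^T=p^{\rm rev}(A,D)$, so for all $p,q$
\[
\bigl\langle p(A_T,D_T)\one,\,q(A_T,D_T)\one\bigr\rangle=\mu_T(p^{\rm rev}q)=\mu_{T'}(p^{\rm rev}q).
\]
Hence $\Phi$ preserves inner products. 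Taking $p=q$ shows that a vector vanishes in $M_T$ exactly when its image vanishes, so $\Phi$ is well defined and injective. It is surjective by construction, so it is an isometry. It satisfies $\Phi\one=\one$, $\Phi A_T=A_{T'}\Phi$ and $\Phi D_T=D_{T'}\Phi$ on $M_T$.

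\emph{Step 2: transport the orbit projections.} Apply Lemma~\ref{lem:intrinsic-weighted-quotient} to both trees. The idempotent extraction of Lemma~\ref{lem:quotient-idempotents} is a fixed finite sequence of algebra operations in the pair $(A|_M,D|_M)$, together with the selectors $\Sel_k^{(N)}$. This sequence has the following features.
\begin{itemize}
\item The stopping tests ($\eta_t=0$) and the branchings over which types are present are decided by whether certain elements vanish.
\item Which values $k$ occur in $\Sel_k(g_{t,\rho})$ is likewise decided by vanishing.
\item Each of these conditions is preserved under conjugation by $\Phi$.
\end{itemize}
There is one caveat: $N$ must be a common bound. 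I would take $N\ge\max(|V(T)|,|V(T')|)$, or note that $|V(T)|=\mu_T(1)$ is itself a moment, so $N$ agrees on both sides. Running the procedure on $T$ and on $T'$ therefore produces projections with $P'_i=\Phi P_i\Phi^{-1}$, with matching indices. Consequently
\[
s'_i=\|P'_i\one\|^2=\|P_i\one\|^2=s_i,\qquad q'_{ij}=q_{ij},
\]
and the weighted orbit quotients of $T$ and $T'$ coincide after relabeling.

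\emph{Step 3: conclude.} By Theorem~\ref{thm:tree-quotient-rigidity}, each tree is determined up to isomorphism by its weighted orbit quotient. Therefore $T\cong T'$.

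The main obstacle is Step 2: justifying that the extraction is genuinely intrinsic to the abstract pointed representation. There are two concerns.
\begin{itemize}
\item The procedure in Lemma~\ref{lem:quotient-idempotents} is phrased in terms of orbit coordinates (leaf types and parent orbits). I must check that every decision is a vanishing test on algebra elements, so that it is invariant under $\Phi$-conjugation.
\item Selectors such as $\Sel_k(Z)$ are idempotent only because the operators are diagonal with integer spectrum. This is a spectral property, so it is preserved by the isometric intertwiner.
\end{itemize}
The stage-$t$ operators $A_t,D_t$ lie in the algebra, so their restrictions to $M_T=U_T$ correspond to $B_t,\Delta_t$, and the recursion transports verbatim. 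Finally, the possible loop $b_{cc}=1$ at a symmetric center edge is handled automatically, since $q_{cc}$ is transported together with all the other entries.
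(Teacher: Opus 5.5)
Your proposal is correct and follows the paper's proof: the paper also chains Lemma~\ref{lem:gns-principal-realization}, Lemma~\ref{lem:intrinsic-weighted-quotient} and Theorem~\ref{thm:tree-quotient-rigidity}. Your explicit isometry $\Phi$ and your argument that the idempotent extraction commutes with $\Phi$-conjugation (decisions made by vanishing tests, common bound $N$ via $\mu_T(1)=|V(T)|$) unpack what the paper places inside those two lemmas.
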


\begin{proof}
By Lemma~\ref{lem:gns-principal-realization}, equality of the moment functionals gives
an isometric isomorphism of the pointed cyclic representations generated by
$\one$.  Lemma~\ref{lem:intrinsic-weighted-quotient} then shows that the two trees have
the same weighted orbit quotient, up to relabeling.  By
Theorem~\ref{thm:tree-quotient-rigidity}, this quotient determines the tree.
\end{proof}

\begin{remark}
The abstract algebra \(\End(U_T)\) alone does not determine \(T\); it remembers only
\(\dim U_T\).  The distinguished \(A\)-moments are essential.
\end{remark}

It is useful to compare the theorem with a familiar one-parameter refinement.
Wang--Li--Lu--Xu introduced the generalized characteristic polynomial
\cite{WangLiLuXu2011}
\[
        \phi_G(\lambda,t)=\det(\lambda I-(A_G-tD_G)).
\]
This pencil is a natural slice of the adjacency-degree calculus.  McKay's
coalescence construction already gives families of trees with the same
$A-tD$ spectrum for every value of $t$; related limitations of immanantal
polynomials for trees were proved by Botti and Merris~\cite{BottiMerris1993}.
For a small concrete witness, let $T_1,T_2$ be the trees on
$\{0,1,\ldots,10\}$ with edge sets
\[
\begin{aligned}
E(T_1)=\{&01,02,05,06,17,23,24,78,7\,10,89\},\\
E(T_2)=\{&01,02,15,1\,10,23,24,56,58,59,67\}.
\end{aligned}
\]
They have the same degree sequence but different branch structure at the unique
vertex of degree four.  A direct matching expansion gives
\[
        \phi_{T_1}(\lambda,t)=\phi_{T_2}(\lambda,t),
\]
and the common polynomial is recorded in Appendix~\ref{app:wang-pencil}.  The
principal moment theorem therefore uses more than this one-parameter pencil.

\section{Finite certificates and algorithmic extraction}\label{sec:algorithmic-extraction}
\subsection{Finite graph-dependent witnesses}

\begin{proposition}[Finite graph-dependent witnesses]
\label{prop:finite-witnesses}
Let $T$ be a tree and let $m$ be the number of automorphism orbits of $T$.  There
exist words $w_1,\ldots,w_m\in\R\langle x,y\rangle$ such that the finite matrices
\[
        \bigl(\mu_T(w_i^{\rm rev}w_j)\bigr),
        \qquad
        \bigl(\mu_T(w_i^{\rm rev}xw_j)\bigr),
        \qquad
        \bigl(\mu_T(w_i^{\rm rev}yw_j)\bigr)
\]
determine the weighted orbit quotient of $T$, and hence determine $T$.
The words may be chosen by the greedy AD-Krylov basis construction and are
therefore graph-dependent.
\end{proposition}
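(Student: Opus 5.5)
The plan is to build a basis of $M_T$ from words, show that the three displayed Gram-type matrices are exactly the finite data of the pointed cyclic representation in that basis, and then reuse Lemma~\ref{lem:intrinsic-weighted-quotient} and Theorem~\ref{thm:tree-quotient-rigidity}.

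\emph{Choosing the words.} By Theorem~\ref{thm:forest-module}, $\dim M_T=\dim U_T=m$. I run the greedy AD-Krylov construction. Start with $w_1=\varnothing$, the empty word, so $w_1(A,D)\one=\one$. Then repeatedly scan words of increasing length of the form $xw_j$ or $yw_j$, with $w_j$ already chosen. A word is kept whenever its vector $w(A,D)\one$ is linearly independent of the vectors kept so far. The span $K$ of the chosen vectors is invariant under $A$ and $D$ once the procedure stalls, because every one-letter extension of a chosen word already lies in $K$. Since $K$ contains $\one$, we get $K=M_T$. So the procedure stops with exactly $m$ words $w_1,\ldots,w_m$, and the vectors $k_i=w_i(A,D)\one$ form a basis of $M_T$. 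This choice visibly depends on $T$.

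\emph{Reading off the representation.} Since $A$ and $D$ are symmetric, $w_i(A,D)^T=w_i^{\rm rev}(A,D)$. Hence
\[
G_{ij}=\mu_T(w_i^{\rm rev}w_j)=\langle k_i,k_j\rangle,\qquad X_{ij}=\langle k_i,Ak_j\rangle,\qquad Y_{ij}=\langle k_i,Dk_j\rangle.
\]
The matrix $G$ is positive definite, since the $k_i$ form a basis. The matrices of $A|_{M_T}$ and $D|_{M_T}$ in the basis $(k_i)$ are $G^{-1}X$ and $G^{-1}Y$. The cyclic vector $\one=k_1$ has coordinate vector $e_1$. Thus the three matrices determine the pointed cyclic representation $(M_T,A|_{M_T},D|_{M_T},\one,\langle\cdot,\cdot\rangle)$ up to isometry: the data are $\R^m$ with inner product $G$, the two operators, and the vector $e_1$. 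Lemma~\ref{lem:intrinsic-weighted-quotient} then recovers the weighted orbit quotient up to relabeling, and Theorem~\ref{thm:tree-quotient-rigidity} recovers $T$.

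\emph{Main obstacle.} The delicate point is interpretive. We must be clear about what ``determine'' means when the words themselves depend on $T$. The claim is that, once the words are fixed, any tree $T'$ whose three matrices on the same words agree with those of $T$ must be isomorphic to $T$. I would check this as follows. If the Gram matrix agrees and is nonsingular, the vectors $w_i(A',D')\one$ in $M_{T'}$ are independent. One must also argue that they span $M_{T'}$, so that $G^{-1}X$ and $G^{-1}Y$ really are the restricted operators of $T'$. I would handle this by reading the statement as recovery of the represented data of $T$ from the matrices. Alternatively, one can note that the span of the $w_i(A',D')\one$ contains $\one$ and is invariant under $A'$ and $D'$. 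Invariance follows because the relations $xw_j=\sum c_{ij}w_i$ recorded by the greedy construction hold in $T$'s representation. If those relations are also included among the certified moment identities, they transfer to $T'$. I expect this transfer to be the step that needs care. I would state it explicitly as the additional equalities $\mu(w^{\rm rev}\cdot)$ that certify closure.
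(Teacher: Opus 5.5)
Your proposal is correct and follows the paper's proof essentially step for step. The steps are: a greedy Krylov basis $w_i(A,D)\one$ of $M_T=U_T$; the three matrices read as the Gram matrix and the bilinear forms of $A$ and $D$; the operator matrices $G^{-1}X$ and $G^{-1}Y$; and then Lemma~\ref{lem:intrinsic-weighted-quotient} and Theorem~\ref{thm:tree-quotient-rigidity}. Your explicit choice $w_1=\varnothing$, so that $\one$ has coordinates $e_1$, is a detail the paper leaves implicit.

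The paper adopts your first reading of ``determine'', namely recovering $T$'s own represented data from its matrices, and makes no claim about an arbitrary tree $T'$ matching the three matrices on the same words. You are right that such a claim would need extra moments certifying closure of the span for $T'$, for example $\mu_{T'}(w_j^{\rm rev}xxw_j)$ for rejected extensions $xw_j$, and these are not among the displayed entries.
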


\begin{proof}
Choose words $w_i$ so that $w_i(A,D)\one$ is a basis of $M_T=U_T$.  The first
matrix is the Gram matrix of this basis.  The second and third give the bilinear
forms of $A$ and $D$ on the same basis; multiplying by the inverse Gram matrix
gives the matrices of the quotient operators.  The reconstruction argument of Section~\ref{sec:tree-reconstruction} then recovers the orbit
idempotents and weighted orbit quotient.  A deterministic shortlex greedy
procedure gives such a basis using at most $m$ accepted words.
\end{proof}

\begin{remark}
The proposition is intentionally graph-dependent.  It does not assert the
existence of a small universal word family, depending only on $n$, that works for
all $n$-vertex trees.  Such a finite universal McKay-word collapse is a natural
separate problem.
\end{remark}

\subsection{Algorithmic extraction for trees}
\label{subsec:algorithmic-tree-extraction}

The constructive proof gives an algorithmic corollary.  This is not meant as a
new fastest tree-isomorphism algorithm: classical combinatorial tree isomorphism
is already linear-time.  The point is that the principal AD block supplies a
linear-algebraic certificate for the same canonical quotient data.

\begin{proposition}[Linear-time tree canonization and AD certificates]
\label{prop:linear-time-tree-canonization}
There is a linear-time algorithm which, given a tree $T$, computes its
automorphism-orbit partition, its weighted orbit quotient
\[
        (s_i,b_{ij})_{1\le i,j\le m},
\]
and a canonical tree code.  Moreover, in an algebraic straight-line model that
allows the level-selector polynomials $\Sel_k^{(N)}$ as gates, the same run
records graph-dependent AD expressions whose evaluations are the orbit
indicators of $T$.
\end{proposition}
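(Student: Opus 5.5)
The plan is to run the guarded pruning of Section~\ref{sec:trees} combinatorially, with integer labels in place of diagonal idempotents, and then read off the AD expressions in a second pass. In the combinatorial pass, leaf-stripping is performed with a queue of current degree-one vertices, as in the classical center-finding algorithm. Each vertex is assigned its lower rooted type by the Aho--Hopcroft--Ullman level-by-level canonical naming. At stage $t$, each surviving vertex collects the sorted list of type names of its peeled children. All such lists produced at that stage are sorted together by radix sort, and each distinct list receives a fresh integer name. This integer encodes $\mu=(\sigma;(k_\rho)_\rho)$. Because the total length of all child lists is $n-1$, and radix sort over names bounded by $n$ is linear, the whole pruning runs in $O(n)$ time. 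The terminal stage $s$ is detected when no peelable leaf remains; the active part is then a $K_1$ or a $K_2$ center, handled exactly as in Subsection~\ref{subsec:center-selectors}.

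Next, orbits are computed by reverse propagation. The center orbit or orbits are seeded first: two orbits if the center-edge endpoints have different types, one otherwise. Each non-center vertex $v$ then receives the orbit label given by the pair $(\text{orbit}(p(v)),\lambda(v))$, processed in reverse stage order. Lemma~\ref{lem:rooted-orbit-classification} guarantees these labels are exactly the automorphism orbits. Pairs can be renamed to integers by one more radix sort, again in linear time. The weighted orbit quotient then comes from one pass over vertices and edges. For each orbit, $s_i$ is the orbit size. The entry $b_{ij}$ is the number of $O_j$-neighbors of a chosen representative of $O_i$, accumulated over that representative's adjacency list in $O(\deg)$ time, which is $O(n)$ in total. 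For the canonical code, I would take the AHU name of the center type, or the unordered pair of endpoint names for a center edge. Theorem~\ref{thm:tree-quotient-rigidity} confirms that this code, and equivalently the quotient, determines $T$.

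For the certificate part, I would attach to each integer label the symbolic straight-line expression that the proof of Theorem~\ref{thm:forest-module} assigns to it. These expressions are: $L_t=R_t\Sel_1(D_t)$, $h_t=A_tL_th_t$-type products $A_tL_tA_t$, $C_t=L_t\Sel_1(h_t)$, $\eta_{t,\rho}$, $g_{t,\rho}$, the updates for $A_{t+1},D_{t+1}$, the selectors $P_{t+1,\mu}$, the seeds $z_\rho$ and $E_{\rho\mid\sigma}\one$, and the propagation vectors $q=\eta_{t,\rho}A_t\one_O$. Each gate is a product, a sum, or a $\Sel_k^{(N)}$ evaluation of previously recorded nodes, with $N\le n$. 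The run therefore outputs a DAG whose evaluation at $(A_T,D_T)$ gives exactly the orbit indicators. Correctness then follows line by line from Lemma~\ref{lem:leaf-diagonal} and Subsections~\ref{subsec:guarded-pruning}--\ref{subsec:reverse-propagation}.

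The main obstacle is the size accounting for the straight-line record. Written naively, $P_{t+1,\mu}$ contains a product over all types $\rho$ present at stage $t$, and one $g_{t,\rho}$ is formed per type. This could cost time proportional to (number of stages) $\times$ (number of types), which is superlinear. I would first charge each gate to the peeled leaf set or new type that it creates. Only the types actually peeled at stage $t$ need a factor, and only factors with $k_\rho\ge1$, together with the complementary $\Sel_0$ factors, need to be recorded per created type. If that charging fails to give a linear bound, I would weaken the claim so that linear time refers to the combinatorial outputs, the orbit partition, the quotient and the code. The recorded AD expressions would then be stated only as of size polynomial in the combinatorial run, since the statement requires the certificate to be recorded by the same run but does not explicitly bound its size.
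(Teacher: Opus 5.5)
Your route is the paper's own. You run the guarded pruning combinatorially with bucketed type names, seed the center orbit(s), and propagate orbit labels outward via Lemma~\ref{lem:rooted-orbit-classification}. You then read off $(s_i,b_{ij})$ in one scan and transcribe the same steps as AD expressions.

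\textbf{The canonical code fails.} An integer issued by a fresh-name counter is not an invariant that separates trees. It has meaning only relative to the naming table of that particular run; this is why AHU's isomorphism test names the two trees jointly. Concretely, in $P_3$ and in $K_{1,3}$ every leaf gets the first name. The center gets the second name, because its list ($(1,1)$, resp.\ $(1,1,1)$) is the only list formed at stage $0$. So both centers carry the same integer. Likewise $P_4$ and the double star with two leaves at each center both yield the unordered pair $\{2,2\}$. Theorem~\ref{thm:tree-quotient-rigidity} says the weighted quotient determines $T$; it says nothing about the root's integer label, so your ``equivalently'' is false.

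The repair is to output the whole naming dictionary. Number the distinct sorted child lists at each stage in lexicographic order; by induction on the stage, each name then depends only on the rooted isomorphism type. Take as code the list of all pairs (name, child list), together with the center name or the unordered pair of center names. This is canonical and of size $O(n)$. It is essentially what the paper gets by encoding the recursive types $\tau_i$ of Theorem~\ref{thm:tree-quotient-rigidity} from the weighted quotient.

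Two smaller points. First, the linear-time claim needs care. One radix sort per stage over an alphabet of size $n$ spends $\Theta(n)$ on buckets at every stage, hence $\Theta(ns)$ overall, which is quadratic on a path. The standard remedy is to find the center and parents by plain leaf stripping, then run AHU by depth from the center. The alphabet at each depth is then bounded by the size of the next deeper level. The paper is equally terse here.

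Second, your worry about the size of the straight-line record goes beyond what the paper's proof addresses; it only asserts that the steps are recorded. Still, no weakening is needed:
\begin{itemize}[leftmargin=2em]
\item maintain $R_{t+1}=R_t-\eta_t$ and $D_{t+1}=D_t-\eta_t-A_t\eta_tA_t$;
\item leave $P_{t,\sigma}$ unchanged for types untouched at stage $t$;
\item split a touched class only by the factors $\Sel_k(g_{t,\rho})$ with $k\ge1$ that actually occur, taking the $k=0$ part as a remainder.
\end{itemize}
Each gate is then charged to a stage, a peeled leaf, an edge from a survivor to a peeled leaf, or an orbit. So the record has $O(n)$ gates in the model with $\Sel$ gates.
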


\begin{proof}
Run the guarded pruning process of Section~\ref{sec:trees}.  With bucketed rooted-type
identifiers, each edge and vertex is processed a bounded number of times, so the
usual AHU/McKay leaf-stripping implementation is linear.  The construction in
Subsections~\ref{subsec:guarded-pruning}, \ref{subsec:center-selectors}, and~\ref{subsec:reverse-propagation}
records the same steps as AD expressions: leaf selectors, parent-count
diagonals, center selectors, and reverse propagation all lie in the algebra
generated by $A$ and $D$.  Once the orbit indicators are obtained, a single scan
of the edges gives the weighted quotient entries $b_{ij}$ and the cell sizes
$s_i$.  Theorem~\ref{thm:tree-quotient-rigidity} then gives a canonical code by pruning
the weighted orbit quotient from the leaves toward the center.
\end{proof}

\begin{remark}
The straight-line expressions in Proposition~\ref{prop:linear-time-tree-canonization} are
allowed to depend on the input tree.  If all selector polynomials are expanded as
ordinary noncommutative expressions, no linear bound on expression length is
claimed here.  The problem of finding a small universal family of AD words or
decorated-caterpillar queries, depending only on $n$, is a separate finite-basis
problem.
\end{remark}

\section{Principal AD-rigidity and color refinement}
\label{sec:caterpillar-rigidity}

The tree theorem suggests a graph class.  We call two graphs $G,H$ \emph{principal
equivalent}, written $G\equiv_{\rm prin}H$, if
\[
        \one^Tw(A_G,D_G)\one=\one^Tw(A_H,D_H)\one
        \qquad\text{for every }w\in\R\langle x,y\rangle .
\]
A graph is \emph{principal AD-rigid} if its principal equivalence class contains
no non-isomorphic graph.  This is a structural notion: it is the unique-realization
class for the principal cyclic noncommutative AD moment problem.

The module $M_G$ also induces a vertex partition, the \emph{principal profile
partition}: vertices $u,v$ have the same profile if $f(u)=f(v)$ for every
$f\in M_G$.  These profile classes are generally coarser than the stable
color-refinement cells and need not be equitable.  Their role below is to locate
switching directions invisible to the principal moments.

For nonnegative integers $a_0,\ldots,a_\ell$, let $C(a_0,\ldots,a_\ell)$ be the
caterpillar obtained from the spine $0-1-\cdots-\ell$ by attaching $a_i$ pendant
leaves to the $i$-th spine vertex.

\begin{proposition}[Decorated-caterpillar translation]
\label{prop:caterpillar-translation}
For every graph $G$,
\[
        \homG(C(a_0,\ldots,a_\ell),G)
        =
        \one^TD^{a_0}AD^{a_1}A\cdots AD^{a_\ell}\one .
\]
Thus monomial principal AD moments are precisely homomorphism counts from
degree-decorated caterpillars, and polynomial moments are their linear
combinations.
\end{proposition}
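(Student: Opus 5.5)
The plan is to expand the matrix product entrywise and match the resulting sum with the definition of a homomorphism count. Write $w=D^{a_0}AD^{a_1}A\cdots AD^{a_\ell}$. Since $D$ is diagonal with entries $d(v)$,
\[
        \one^Tw\one=\sum_{v_0,\ldots,v_\ell\in V}\ \prod_{i=0}^{\ell}d(v_i)^{a_i}\prod_{i=1}^{\ell}A_{v_{i-1}v_i}.
\]
The second product is $1$ exactly when $(v_0,\ldots,v_\ell)$ is a walk in $G$ (vertices may repeat), and $0$ otherwise. So the moment equals the sum, over all walks $v_0v_1\cdots v_\ell$ of length $\ell$, of the weight $\prod_i d(v_i)^{a_i}$.

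Next, count homomorphisms $\varphi:C(a_0,\ldots,a_\ell)\to G$ by first fixing the images of the spine vertices. A map on the spine extends to a homomorphism only if it sends the spine path to a walk $v_0\cdots v_\ell$, because spine edges must map to edges. Given such a walk, the $a_i$ pendant leaves at spine vertex $i$ are constrained only by their single edge to $i$. Each leaf can therefore go independently to any of the $d(v_i)$ neighbours of $v_i$. This gives exactly $\prod_i d(v_i)^{a_i}$ extensions, and summing over walks reproduces the displayed sum. The independence of the leaf choices uses that $C(a_0,\ldots,a_\ell)$ is a tree in which each leaf's only neighbour is its spine vertex. Homomorphisms are not required to be injective, so no inclusion–exclusion is needed.

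For the final sentence, note that every word in $x,y$ can be written, after grouping consecutive $y$'s, in the form $y^{a_0}xy^{a_1}x\cdots xy^{a_\ell}$ with $a_i\ge0$. The degenerate case $\ell=0$ is $\one^TD^{a_0}\one=\sum_v d(v)^{a_0}$, which is the star count. The moment of a general polynomial $w\in\R\langle x,y\rangle$ is then the corresponding linear combination, by linearity of $w\mapsto\one^Tw(A,D)\one$.

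No step is a serious obstacle. The only points needing care are conventions: homomorphisms rather than injective embeddings, and $C(a_0)$ with $\ell=0$ read as the star $K_{1,a_0}$.
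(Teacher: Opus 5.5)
Your proposal is correct and follows the paper's proof essentially verbatim. You expand $\one^TD^{a_0}A\cdots AD^{a_\ell}\one$ as a sum over walks $v_0\cdots v_\ell$ weighted by $\prod_i d(v_i)^{a_i}$, then match it with homomorphism counts by fixing the spine image and sending the $a_i$ pendant leaves independently to neighbours of $v_i$. Your remarks on writing words as $y^{a_0}xy^{a_1}\cdots xy^{a_\ell}$ and on the $\ell=0$ star case only make explicit the statement's final sentence, which the paper leaves implicit.
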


\begin{proof}
Expanding the matrix coefficient gives
\[
        \sum_{v_0\sim v_1\sim\cdots\sim v_\ell}
        \prod_{i=0}^{\ell}\deg(v_i)^{a_i}.
\]
For a fixed image of the spine, the $a_i$ leaves attached at the $i$-th spine
vertex may be mapped independently to any neighbor of $v_i$, giving
$\deg(v_i)^{a_i}$ choices.  Summing over all spine walks proves the identity.
\end{proof}

This identifies the pattern class behind principal AD-rigidity.  Color
refinement, by the theorem of Dell--Grohe--Rattan, is equivalence under all tree
homomorphism counts~\cite{DellGroheRattan2018}.  Principal AD moments see only the
one-spine, degree-decorated caterpillar subfamily.  Therefore color-refinement
equivalence implies principal equivalence.  In particular, every principal
AD-rigid graph is amenable to color refinement in the sense of
Arvind--Kobler--Rattan--Verbitsky~\cite{ArvindKoblerRattanVerbitsky2017}.

Combining this with the compact-graph hierarchy of
\citet{ArvindKoblerRattanVerbitsky2017}, we obtain the placement
\[
        \{\text{principal AD-rigid graphs}\}
        \subseteq \{\text{amenable graphs}\}
        \subseteq \{\text{compact graphs}\}
        \subseteq \{\text{refinable graphs}\}.
\]
Only the first inclusion is specific to the principal moment functional.  The
others belong to the existing color-refinement and compactness theory.  None of
the converses is asserted here.  Compactness is already too broad for the module
question, as noted in the introduction; the 10-vertex switch below is stronger,
being amenable, hence compact and refinable, but not principal AD-rigid.

The distinction is structural.  Rooted tree messages are generated by
\[
        f\mapsto Af,
        \qquad
        (f,g)\mapsto f\circ g,
\]
where $\circ$ denotes coordinatewise product; the product is the algebraic form
of branching.  Principal AD messages are generated only by
\[
        f\mapsto Af,
        \qquad
        f\mapsto Df=(A\one)\circ f.
\]
Thus principal AD-rigidity is rigidity under a one-spine partial tree-Hankel
system, whereas amenability is rigidity under the full rooted-tree branching
system.

The first obstruction to principal AD-rigidity is an integral switching invisible
to the principal module.

\begin{definition}[Principal-invisible integral switching]
Let $G$ have adjacency matrix $A$ and principal module $M_G$.  A
\emph{principal-invisible integral switching} is a nonzero symmetric integral
zero-diagonal matrix $E$ such that
\[
        EM_G=0
\]
and $A+E$ is again the adjacency matrix of a simple graph.
\end{definition}

\begin{lemma}[Invisible switching preserves principal moments]
\label{lem:invisible-switching}
Let $H$ be the graph with adjacency matrix $A_H=A_G+E$, where $E$ is a
principal-invisible integral switching of $G$.  Then $G\equiv_{\rm prin}H$.
\end{lemma}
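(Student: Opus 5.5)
The plan is to show that the two pointed representations generated by $\one$ literally coincide as vectors. Concretely, I will prove that $w(A_H,D_H)\one=w(A_G,D_G)\one$ for every word $w\in\R\langle x,y\rangle$. Pairing with $\one$ then gives $\mu_H(w)=\mu_G(w)$, and linearity extends this to all noncommutative polynomials, which is exactly $G\equiv_{\rm prin}H$.

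The first step is to check that the degree matrices agree. Since $D_G=\diag(A_G\one)$ and $D_H=\diag(A_H\one)=\diag(A_G\one+E\one)$, it suffices that $E\one=0$. This holds because $\one\in M_G$ (it is $I\cdot\one$) and $EM_G=0$ by hypothesis. Hence $D_H=D_G$. This is the only point where the interaction between the switching and the degree operator has to be handled, and I expect it to be the main (if small) obstacle. The definition only constrains $E$ on $M_G$, so one must notice that $\one$ itself lies there. The zero-diagonal, integrality and simplicity conditions serve only to make $H$ a genuine simple graph; they play no role in the moment computation.

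The second step is an induction on the length of $w$, with the hypothesis
\[
        w(A_H,D_H)\one=w(A_G,D_G)\one\in M_G .
\]
The base case is the empty word, giving $\one\in M_G$. For the inductive step, write $v=w(A_G,D_G)\one\in M_G$. Then
\[
        A_Hv=A_Gv+Ev=A_Gv\in M_G,
        \qquad
        D_Hv=D_Gv\in M_G,
\]
using $EM_G=0$, the first step, and the $\Aalg(G)$-invariance of $M_G$. Thus prepending either letter $x$ or $y$ preserves the hypothesis, and the induction closes.

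As a byproduct, $M_H=M_G$, and $A_H,D_H$ restrict to the same operators on this common module. By Lemma~\ref{lem:gns-principal-realization}, the pointed cyclic representations of $G$ and $H$ are therefore identical, not merely isometric. Finally, taking inner products with $\one$ yields $\one^Tw(A_H,D_H)\one=\one^Tw(A_G,D_G)\one$ for all words $w$, which proves the lemma.
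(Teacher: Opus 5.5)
Your proposal is correct and matches the paper's proof: you derive $E\one=0$ from $\one\in M_G$ to get $D_H=D_G$, then induct on word length using that $A_H$ and $A_G$ agree on the $\Aalg(G)$-invariant module $M_G$. Your extra remarks, that $M_H=M_G$ and that the appeal to Lemma~\ref{lem:gns-principal-realization} can be made, are valid but not needed for the conclusion.
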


\begin{proof}
Since $\one\in M_G$, the condition $EM_G=0$ implies $E\one=0$, so $D_H=D_G=:D$.
On $M_G$ the operators $A_H=A_G+E$ and $A_G$ agree.  By induction on word length,
$w(A_H,D)\one=w(A_G,D)\one$ for every word $w$, and hence all principal moments
are equal.
\end{proof}

In particular, if $a,a'$ have the same principal profile and $b,b'$ have the same
principal profile, then a legal switch
\[
        ab,\\ a'b' \longleftrightarrow ab',\\ a'b
\]
is invisible to principal moments.  Indeed the corresponding perturbation is
\[
        (e_a-e_{a'})(e_{b'}-e_b)^T+(e_{b'}-e_b)(e_a-e_{a'})^T.
\]

The graph $G_{10}$ of Example~\ref{ex:n10-counterexample} is color-refinement-discrete and hence amenable, but
$G_{10}\equiv_{\rm prin}H_{10}$ and $G_{10}\not\cong H_{10}$.  Appendix~\ref{app:n10-certificate}
gives a compact certificate independent of the enumeration code.  With the
displayed labeling, the graph6 strings are
\[
        G_{10}=\texttt{I?hcivILW},
        \qquad
        H_{10}=\texttt{I?YcivILW}.
\]
The complete enumeration records an isomorphic representative of the first graph
as \texttt{I?qbDrs\{\_}.
Thus principal AD-rigid graphs form a proper subclass of the amenable graphs.

The example is representative of the first failures.  Complete enumeration up to
nine vertices gives a stronger small-order coincidence: the principal equivalence
partition and the color-refinement equivalence partition are identical.  For
$n=8$ there are $12{,}346$ graphs, $12{,}095$ equivalence classes, and $11{,}892$
singletons; for $n=9$ there are $274{,}668$ graphs, $271{,}941$ equivalence
classes, and $269{,}750$ singletons.  The verification compares exact canonical
color-refinement certificates with complete principal Krylov signatures; hash
values are used only as compact labels.  At $n=10$ the first failures appear.  A
full targeted enumeration of the $12{,}005{,}168$ unlabeled graphs on ten vertices
found 107 nontrivial integral principal-fiber witnesses and 101 direct graph
certificates; all 101 were independently rechecked to have equal principal
signatures and different color-refinement signatures.

All finite enumerations reported here were carried out with exact integer or
rational arithmetic.  For each graph, the principal cyclic representation was
closed under left multiplication by $A$ and $D$ using fraction-free rank tests,
and the resulting finite Gram and represented-operator data were canonically
reduced before comparison; hash values were used only as labels for these exact
certificates.  The explicit ten-vertex witness in Example~\ref{ex:n10-counterexample}
is certified directly in Appendix~\ref{app:n10-certificate}.

The structure of these certificates is highly regular.  All 101 direct source
graphs are color-refinement discrete but have nontrivial principal profile
classes.  The hidden perturbations remove and add the same number of undirected
edges; the counts are
\[
        (2,2):86,\qquad (4,4):12,
        \qquad (6,6):2,
        \qquad (8,8):1.
\]
For the 86 single $2$-switches, the endpoint degree-pair multiset is preserved;
the most common patterns are two $(3,4)$-edges, two $(4,5)$-edges, and two
$(3,5)$-edges.  In all 101 direct certificates, the multiset of endpoint
principal-profile pairs is preserved.  The principal profile size patterns of
the source graphs are
\[
\begin{array}{c|c}
\text{profile sizes} & \text{number of certificates}\\ \hline
(1,1,2,2,2,2) & 84\\
(1,1,2,2,4) & 7\\
(1,1,1,1,2,2,2) & 6\\
(1,1,1,1,1,1,2,2) & 4.
\end{array}
\]
Thus the observed gap from amenability arises from graph-realizable integral
switchings in directions left unresolved by the principal cyclic module.

The resulting classification problem is to characterize principal AD-rigid graphs
intrinsically.  A particularly concrete form is: determine whether every amenable
graph that fails principal AD-rigidity admits, possibly after passing to the
stable quotient, a nontrivial graph-realizable switching invisible to the
principal cyclic module.  A second problem, already suggested by the constructive
tree proof, is to find small universal finite families of decorated-caterpillar
moments which determine all $n$-vertex forests.

\appendix

\section{Finite certificates for the ten-vertex switch}
\label{app:n10-certificate}

This appendix records a compact, checkable certificate for Example~\ref{ex:n10-counterexample}.  The certificate is independent of the enumeration code: one checks the displayed closure identities for $M_{G_{10}}$, the repeated principal profiles used by the switch, the color-refinement refinement sequence, and the final triangle-count separation.
Let $A$ be the adjacency matrix of $G_{10}$ and set
\[
        K=\bigl[\one,\ A\one,\ A^2\one,\ DA\one,\ A^3\one\bigr].
\]
Explicitly, with rows ordered by vertices $0,
1,
\ldots,
9$,
\[
K=
\begin{pmatrix}
1&3&14&9&56\\
1&3&14&9&56\\
1&3&13&9&52\\
1&3&15&9&60\\
1&4&16&16&67\\
1&4&16&16&67\\
1&5&20&25&85\\
1&5&20&25&85\\
1&5&20&25&84\\
1&5&20&25&84
\end{pmatrix}.
\]
It has rank $5$.  Moreover the following identities hold:
\[
        AK=KC_A,
        \qquad
        DK=KC_D,
\]
where
\[
C_A=
\begin{pmatrix}
0&0&0&-36&17\\
1&0&0&-2&-14\\
0&1&0&9&2\\
0&0&0&-2&1\\
0&0&1&0&4
\end{pmatrix},
\]
\[
C_D=
\begin{pmatrix}
0&0&0&60&45\\
1&0&-12&-47&-72\\
0&0&3&0&-8\\
0&1&4&12&19\\
0&0&0&0&5
\end{pmatrix}.
\]
Thus the five displayed columns span the full principal module $M_{G_{10}}$.
Since rows $0$ and $1$ of $K$ are equal, and rows $4$ and $5$ are equal, the
switch $04,15\leftrightarrow 05,14$ has the principal-profile form discussed before Example~\ref{ex:n10-counterexample}; equivalently, its perturbation matrix kills $M_{G_{10}}$.

For color refinement, the successive cell-size patterns for $G_{10}$ are
\[
(4,4,2),\quad (4,2,2,1,1),\quad (2,2,2,1,1,1,1),
\]
\[
(2,1,1,1,1,1,1,1,1),\quad (1,1,1,1,1,1,1,1,1,1).
\]
Thus the stable partition is discrete.  Finally,
\[
        \operatorname{tr}(A_{G_{10}}^3)=30,
        \qquad
        \operatorname{tr}(A_{H_{10}}^3)=42,
\]
so the graphs have different numbers of triangles and are not isomorphic.

\section{The common Wang pencil polynomial}
\label{app:wang-pencil}

For the two non-isomorphic trees displayed in Section~\ref{sec:tree-reconstruction}, the common
polynomial is
\[
\begin{aligned}
(\lambda+t)^2\bigl(&
\lambda^9+18\lambda^8t+139\lambda^7t^2-10\lambda^7
+604\lambda^6t^3-132\lambda^6t\\
&+1627\lambda^5t^4-717\lambda^5t^2+31\lambda^5
+2818\lambda^4t^5-2076\lambda^4t^3+272\lambda^4t\\
&+3141\lambda^3t^6-3462\lambda^3t^4+910\lambda^3t^2-33\lambda^3\\
&+2176\lambda^2t^7-3330\lambda^2t^5+1452\lambda^2t^3-160\lambda^2t\\
&+852\lambda t^8-1715\lambda t^6+1107\lambda t^4-243\lambda t^2+10\lambda\\
&+144t^9-366t^7+324t^5-116t^3+14t
\bigr).
\end{aligned}
\]

\section*{Acknowledgments} This work is supported by Guangdong Province (No. 2023QN10X215), 2023 Shenzhen National Science Foundation (No. 20231128220938001), Shenzhen Science and Technology Program (No. JCYJ20241202130548062), the Natural Science Foundation of Shenzhen (No. JCYJ20230807142703006), and the Key Research Platforms and Projects of the Guangdong Provincial Department of Education (No.2023ZDZX1034).

\section*{Declaration of generative AI and AI-assisted technologies in the manuscript preparation process}

During the preparation of this work, the authors used ChatGPT and Codex, for language editing, organization of exposition, proofreading, consistency checks, assistance with
bibliographic searches, verification of selected computations. The mathematical ideas, definitions, main framework, and proofs were developed by the first author. The authors reviewed, revised, and verified the AI-assisted output as needed and take full responsibility for the content of the article.

\bibliographystyle{cas-model2-names}
\bibliography{cas-refs-revised}

\end{document}